\DeclareMathOperator{\GL}{GL}
\DeclareMathOperator{\PGL}{PGL}
\newcommand{\PT}{\mathrm{P}T}
\newcommand{\fra}{\mathfrak{a}}
\newcommand{\BB}{\mathcal{B}}
\renewcommand*{\@fnsymbol}{\@arabic}
\title{Cohomology Rings of Moduli of Point Configurations on the Projective Line}
\author{Hans Franzen%
\thanks{Faculty of Mathematics, Ruhr-Universit\"at Bochum, Universit\"atsstra{\ss}e 150, 44780 Bochum\newline \href{mailto:hans.franzen@rub.de}{hans.franzen@rub.de}}
\and Markus Reineke%
\thanks{Faculty of Mathematics, Ruhr-Universit\"at Bochum, Universit\"atsstra{\ss}e 150, 44780 Bochum\newline \href{mailto:markus.reineke@rub.de}{markus.reineke@rub.de}}%
}
\date{}
\begin{document}
	\maketitle
	\begin{abstract}
		We describe the Chow rings of moduli spaces of ordered configurations of points on the projective line for arbitrary (sufficiently generic) stabilities. As an application, we exhibit such a moduli space admitting two small desingularizations with non-isomorphic cohomology rings.
	\end{abstract}
	
	\section{Introduction}
	One of the classical examples of geometric invariant theory is the moduli space of ordered point configurations on the projective line \cite[Chap.\ 3]{GIT:94}. Recall that this is the space of semi-stable ordered $m$-tuples of points in $\mathbb{P}^1$ modulo projective equivalence, that is, modulo the action of the group ${\rm PGL}_2$. Here semi-stability is typically understood with respect to the symmetric stability: a tuple of points is semi-stable if at most $m/2$ points coincide. The resulting moduli space is an irreducible normal projective variety of dimension $m-3$. It is smooth for odd $m$, and singular with isolated singularities in case $m$ is even.
	
	The (intersection) Betti numbers of this moduli space are determined in \cite[Ex.\ 8.11, 8.15]{GIT:94}. An explicit coordinatization is described in \cite{HMSV:09}. In the disguise of polygon spaces, the rational cohomology ring is described by generators and relations in \cite{HK:98}. In the case of the symmetric stability, a description of the rational Chow ring is given in \cite{Franzen:15:Chow_Ring_Quiv} using an interpretation as a moduli space of quiver representations.
	
	In the present paper, we first use the approach via moduli spaces of quiver representations (the necessary prerequisites being recalled in Section \ref{section:MQR}) to give a unified presentation of the rational Chow ring for arbitrary (sufficiently generic) stabilities in Section \ref{section:CRMPC}; see Theorem \ref{t:gens}. We first describe the rational Chow ring of the quotient stack of all ordered tuples by projective equivalence, and then determine the remaining relations arising from the open embedding of the moduli space.
	
	As our main application of this description, we show that the moduli spaces of an even number of points (with respect to symmetric stability) admit two small desingularizations with non-isomorphic rational cohomology rings, see Corollary \ref{c:non-isom}. The existence of such spaces is a classical topic of intersection homology theory, disproving the existence of a natural ring structure in intersection homology---the classical example of such a space is the Schubert variety $\{V \in \Gr_2(\C^5) \mid \dim( V \cap \C^3) \geq 1 \}$, see \cite[Ex.\ 2]{GM:83}. Using a general analysis of stability conditions in Section \ref{section:SCSQ}, we single out two stabilities deforming the symmetric one. It is shown in \cite{Reineke:15} that the corresponding moduli spaces provide small resolutions of singularities. Using the explicit description of their Chow rings, we prove the claim purely algebraically in Sections \ref{section:A} and \ref{section:RS}.
	\subsection*{Acknowledgements}
	The authors would like to thank T. Hausel and L. Migliorini for a discussion on small resolutions of moduli spaces of point configurations. While doing most of this research, H.F. was supported by the DFG SFB / Transregio 45 \glqq Perioden, Modulr\"aume und Arithmetik algebraischer Variet\"aten\grqq.

	\section{Moduli of Quiver Representations}\label{section:MQR}
	
	A quiver $Q$ is a finite oriented graph. Denote its set of vertices with $Q_0$ and its set of arrows with $Q_1$. A complex representation $M$ of $Q$ consists of complex vector spaces $M_i$ for every $i \in Q_0$ and linear maps $M_a: M_i \to M_j$ attached to every arrow $a: i \to j$. There is an obvious notion of a homomorphism of representations yielding an abelian category of all (complex) representations of $Q$. Our representations will always be assumed to be finite-dimensional (i.e.\ every $M_i$ is a finite-dimensional vector space). In this case, we can define the dimension vector $\dimvect M = (\dim M_i)_{i \in Q_0}$. Fix a dimension vector $d \in \smash{\Z_{\geq 0}^{Q_0}}$ and consider the vector space
	$$
		R(Q,d) = \bigoplus_{a: i \to j} \Hom(\C^{d_i},\C^{d_j}).
	$$
	Its elements can be regarded as representations of $Q$ of dimension vector $d$. On $R(Q,d)$ we have an action of the complex linear algebraic group $\GL_d = \prod_{i \in Q_0} \GL_{d_i}$ by change of basis. The diagonally embedded multiplicative group acts trivially whence the $\GL_d$-action descends to an action of $\PGL_d = \GL_d/\C^\times$. The orbits of this group action are in one-to-one correspondence with the isomorphism classes of (complex) representations of $Q$ of dimension vector $d$. We can also interpret this set as the set of $\C$-valued points of the quotient stack $[R(Q,d)/\PGL_d]$.
	
	If we want the quotient to carry a ``nicer'' geometric structure, we have to impose a stability condition. In King's article \cite{King:94} Mumford's criterion (cf.\ \cite[Thm.\ 2.1]{GIT:94}) is translated to a purely algebraic condition. Fix a dimension vector $d$. Let $\theta: \Q^{Q_0} \to \Q$ be a linear map for which $\theta(d) = 0$. 
	A representation $M$ of $Q$ of dimension vector $d$ is called $\theta$-semi-stable ($\theta$-stable) if $\theta(\dimvect M') \leq 0$ (resp.\ $\theta(\dimvect M') < 0$) for every proper, non-zero subrepresentation $M'$ of $M$. The category of $\theta$-semi-stable representations is an abelian finite length category; the simple objects of this category are the $\theta$-stable representations. We consider the open subsets $R(Q,d)^{\theta-\st} \sub R(Q,d)^{\theta-\sst} \sub R(Q,d)$. The categorical quotient $M(Q,d)^{\theta-\sst} = R(Q,d)^{\theta-\sst}/\!\!/\PGL_d$ parametrizes isomorphism classes of $\theta$-polystable representations of dimension vector $d$; these are the semi-simple objects of the category of semi-stable representations. The image of the stable locus $R(Q,d)^{\theta-\st}$ under the quotient map $R(Q,d)^{\theta-\sst} \to M(Q,d)^{\theta-\sst}$ is an open subset $M(Q,d)^{\theta-\st}$ and the restriction $R(Q,d)^{\theta-\st} \to M(Q,d)^{\theta-\st}$ is a geometric $\PGL_d$-quotient in the sense of Mumford \cite[Def.\ 0.6]{GIT:94}, even a principal fiber bundle in the \'{e}tale topology. In particular, $M(Q,d)^{\theta-\st}$ is smooth. Its points are in one-to-one correspondence with isomorphism classes of $\theta$-stable representations of dimension vector $d$. In case that the quiver has no oriented cycles the variety $M(Q,d)^{\theta-\sst}$ is projective (see \cite[Prop.\ 4.3]{King:94}).
	
	
	If every $\theta$-semi-stable representation of dimension $d$ is stable then the moduli spaces $M(Q,d)^{\theta-\sst}$ and $M(Q,d)^{\theta-\st}$ agree; we write $M(Q,d)^\theta$ in this case. For example, this is the case if $d$ is $\theta$-coprime, which means that $\theta(e) \neq 0$ for every dimension vector $0 \leq e \leq d$, unless $e = 0$ or $e = d$. If $d$ is $\theta$-coprime for some stability condition $\theta$ then $d$ is necessarily indivisible (i.e.\ $\operatorname{gcd}(d_i \mid i \in Q_0) = 1$). On the contrary, if $d$ is indivisible, we find a stability condition for which $d$ is coprime.
	
	However, if there are properly semi-stable points for $\theta$ then the moduli space $M(Q,d)^{\theta-\sst}$ is typically singular. The paper \cite{Reineke:15} deals with the question when small desingularizations can be constructed. 
	
	Recall that a small desingularization of a variety\footnote{Variety means irreducible here and in the following.} $X$ is a proper birational map $f: Y \to X$ from a smooth variety for which there exists a stratification $X = \bigsqcup X_i$ into locally closed subsets $X_i$ over each of which $f$ is \'{e}tale locally trivial and such that
	$$
		\dim f^{-1}(x) \leq \frac{1}{2} \codim_X(X_i)
	$$
	for every $x \in X_i$, the estimate being strict for all strata but the dense open one. The idea for constructing small desingularizations of $M(Q,d)^{\theta-\sst}$ is to find a stability condition $\theta'$ ``close to'' $\theta$ which is sufficiently generic.
	
	\begin{defn*}[{\cite[Def.\ 3.1]{Reineke:15}}]
		Let $d$ be a dimension vector of $Q$ and let $\theta$ be a stability condition such that $\theta(d) = 0$. A stability condition $\theta'$ of $Q$ with $\theta'(d) = 0$ 
		is called a deformation of $\theta$ with respect to $d$ if the following conditions hold for every proper, non-zero sub--dimension vector $0 \leq e \leq d$:
		\begin{enumerate}
			\item $\theta(e) < 0$ implies $\theta'(e) < 0$ and
			\item $\theta'(e) \leq 0$ implies $\theta(e) \leq 0$.
		\end{enumerate}
		A deformation $\theta'$ of $\theta$ with respect to $d$ is called generic if $d$ is $\theta'$-coprime. 
	\end{defn*}
	
	Under a certain assumption on the Euler form of $Q$---that is the bilinear form $\chi_Q: \Z^{Q_0} \times \Z^{Q_0} \to \Z$ defined by $\chi_Q(d,e) = \sum_{i \in Q_0} d_ie_i - \sum_{\alpha: i \to j} d_ie_j$---a small desingularization can be obtained from a generic deformation of the stability condition:
	
	\begin{thm}[{\cite[Thm.\ 4.3]{Reineke:15}}]
		Let $Q$ be a quiver, $d$ an indivisible dimension vector of $Q$, and $\theta$ a stability condition with $\theta(d) = 0$ such that a $\theta$-stable representation of dimension $d$ exists. Suppose that $\chi_Q$ is a symmetric bilinear form on $\ker(\theta)$. Then the natural morphism $p: M(Q,d)^{\theta'} \to M(Q,d)^{\theta-\sst}$ induced by a generic deformation $\theta'$ of $\theta$ is a small desingularization.
	\end{thm}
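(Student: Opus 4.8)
The plan is to show first that $p$ is a resolution of singularities and then a \emph{small} one, the smallness being reduced---via the \'{e}tale slice theorem---to a dimension count for a symmetric ``local'' quiver. For the first part, the deformation axioms immediately give that $p$ is well defined and an isomorphism over a dense open set: if $M$ is $\theta'$-semistable then $\theta'(\dimvect M')\le 0$ for every subrepresentation $M'$, hence $\theta(\dimvect M')\le 0$ by axiom~(2), so $R(Q,d)^{\theta'-\sst}\sub R(Q,d)^{\theta-\sst}$, a $\GL_d$-equivariant open inclusion which descends to $p$; since both sides are GIT quotients of the quasi-affine $R(Q,d)^{\theta-\sst}$ by $\GL_d$, compatibly with the maps to $\operatorname{Spec}\C[R(Q,d)]^{\GL_d}$, the morphism $p$ is projective (variation of GIT); both spaces are irreducible (quotients of non-empty opens in the affine space $R(Q,d)$); and $M(Q,d)^{\theta'}=M(Q,d)^{\theta'-\st}$ is smooth because $d$ is $\theta'$-coprime. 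Axiom~(1) shows that a $\theta$-stable representation is $\theta'$-stable, so $R(Q,d)^{\theta-\st}\sub R(Q,d)^{\theta'-\st}$, and comparing the geometric quotients there, $p$ is an isomorphism over the dense open $M(Q,d)^{\theta-\st}$, non-empty by hypothesis. Hence $p$ is a resolution, and only smallness remains.

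Next I would stratify $X:=M(Q,d)^{\theta-\sst}$ by polystable type: a point is represented by a polystable $M\cong\bigoplus_{k=1}^{s}S_k^{\oplus m_k}$ with the $S_k$ pairwise non-isomorphic $\theta$-stable of dimension vector $d^{(k)}\in\ker(\theta)$ and $\sum_k m_k d^{(k)}=d$; let $S_\tau$ be the locally closed stratum of type $\tau=\{(d^{(k)},m_k)\}$, with $\tau_0=\{(d,1)\}$ the dense one. Applying Luna's \'{e}tale slice theorem at $x\in S_\tau$ with representative $M$: the stabiliser is $\operatorname{Aut}(M)=\GL_m$, $m=(m_k)$, the normal slice is $\operatorname{Ext}^1_Q(M,M)\cong R(Q_\tau,m)$ where $Q_\tau$ is the local quiver on vertices $1,\dots,s$ with $a_{kl}:=\dim\operatorname{Ext}^1_Q(S_k,S_l)$ arrows $k\to l$, and the theorem identifies $p$ \'{e}tale-locally near $x$ with the contraction $p_\tau\colon M(Q_\tau,m)^{\theta'_\tau}\to R(Q_\tau,m)/\!\!/\GL_m$; here $\theta$ restricts to the trivial stability on $Q_\tau$ (as $d^{(k)}\in\ker(\theta)$) and $\theta'_\tau(e^{(k)}):=\theta'(d^{(k)})$, which is a generic stability ($m$ is $\theta'_\tau$-coprime). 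Because $a_{kl}=-\chi_Q(d^{(k)},d^{(l)})$ for $k\ne l$ and $a_{kk}=1-\chi_Q(d^{(k)},d^{(k)})$, the triple $(Q_\tau,m,\theta'_\tau)$ depends only on $\tau$, so $p$ is \'{e}tale-locally trivial over each $S_\tau$; and as $x$ is the centre of the slice, $p^{-1}(x)\cong p_\tau^{-1}(\bar 0)$, where $\bar 0$ is the image of the origin in $R(Q_\tau,m)/\!\!/\GL_m$. Concretely $p_\tau^{-1}(\bar 0)$ is the locus of $\theta'_\tau$-stable $Q_\tau$-representations all of whose composition factors are the vertex simples.

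The hypothesis now enters: since $\operatorname{rep}(Q)$ is hereditary and the $S_k$ are pairwise non-isomorphic simple objects of the slope-$0$ abelian category, $\chi_{Q_\tau}(e^{(k)},e^{(l)})=\delta_{kl}-a_{kl}=\chi_Q(d^{(k)},d^{(l)})$; as the $d^{(k)}$ lie in $\ker(\theta)$, on which $\chi_Q$ is symmetric by hypothesis, $a_{kl}=a_{lk}$---that is, $Q_\tau$ is a symmetric quiver. Using $\dim X=1-\chi_Q(d,d)$ and $\dim S_\tau=\sum_k(1-\chi_Q(d^{(k)},d^{(k)}))=\sum_k a_{kk}$ one finds $\codim_X(S_\tau)=1-\sum_k m_k^2+\sum_{k,l}m_km_l a_{kl}-\sum_k a_{kk}$. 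Now $p_\tau^{-1}(\bar 0)$ is the quotient by $\PGL_m$ of the $\theta'_\tau$-stable part of the nullcone $Z\sub R(Q_\tau,m)$, and $Z$ is covered by images of $\GL_m\times_B\mathfrak n_c$, one for each colouring $c$ of a complete flag by vertex simples ($B\sub\GL_m$ a Borel, $\mathfrak n_c$ the corresponding strictly-triangular part of the arrow space). The symmetry $a_{kl}=a_{lk}$ makes $\dim\mathfrak n_c=\tfrac12(\sum_{k,l}a_{kl}m_km_l-\sum_k a_{kk}m_k)$ independent of $c$, so that
\[
\dim p_\tau^{-1}(\bar 0)\ \le\ \dim Z-\dim\PGL_m\ \le\ \tfrac12\codim_X(S_\tau)-\tfrac12\Bigl(\sum\nolimits_k m_k-1\Bigr)-\tfrac12\sum\nolimits_k a_{kk}(m_k-1).
\]
Since $\sum_k m_k\ge 1$, the right-hand side is $\le\tfrac12\codim_X(S_\tau)$, with equality only if $\sum_k m_k=1$, i.e.\ $\tau=\tau_0$; for $\tau\ne\tau_0$ the inequality is strict. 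Together with the two preceding steps, this shows $p$ is a small desingularization.

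The decisive point will be the dimension estimate just above---bounding $\dim p_\tau^{-1}(\bar 0)$ by half of $\codim_X(S_\tau)$, strictly off the dense stratum. This is precisely where ``$\chi_Q$ symmetric on $\ker(\theta)$'' is used: it forces the local quiver to be symmetric, and the resulting identity $a_{kl}=a_{lk}$ is what makes the nullcone ``half-dimensional'' (up to the flag-variety correction). The other place needing care is the slice reduction: one must set up Luna's theorem so that the fibre of $p$ over a point of $S_\tau$ genuinely equals $p_\tau^{-1}(\bar 0)$ and the family over $S_\tau$ is \'{e}tale-locally trivial---standard slice theory, but it has to be carried out cleanly.
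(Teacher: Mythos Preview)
The paper does not prove this theorem; it is quoted verbatim from \cite[Thm.~4.3]{Reineke:15} and used as a black box. So there is no ``paper's own proof'' to compare against here---your sketch is an attempt to reconstruct the argument of the cited reference, not of the present paper.

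That said, your outline is faithful to the strategy of \cite{Reineke:15}: reduce to the local quiver via Luna's slice theorem, observe that the symmetry hypothesis on $\chi_Q|_{\ker\theta}$ forces the local quiver $Q_\tau$ to be symmetric, and then bound the fibre over the origin by a nullcone dimension count. Your codimension formula and the expression $\dim\mathfrak n_c=\tfrac12\bigl(\sum_{k,l}a_{kl}m_km_l-\sum_k a_{kk}m_k\bigr)$ are correct, and the final inequality with the correction terms $\tfrac12(\sum_k m_k-1)$ and $\tfrac12\sum_k a_{kk}(m_k-1)$ indeed gives strict smallness off the dense stratum. Two points would need to be tightened in a full write-up: the Luna slice identification of the induced stability $\theta'_\tau$ with the one you wrote down (and that $m$ is $\theta'_\tau$-coprime because $d$ is $\theta'$-coprime), and the passage from the cover $\GL_m\times_B\mathfrak n_c\twoheadrightarrow Z$ to a bound on $\dim Z$ (the covers need not be birational onto their images, but an upper bound by $\dim\GL_m/B+\dim\mathfrak n_c$ suffices, which is all you use). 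With those filled in, the argument goes through.
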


	We want to recall a description of the Chow ring of the moduli stack $[\smash{R(Q,d)^{\theta-\sst}}/\PGL_d]$, i.e.\ the equivariant Chow ring $A_{\PGL_d}^*(\smash{R(Q,d)^{\theta-\sst}})_\Q$ with rational coefficients. For simplicity we will always use rational coefficients, although it is not always necessary. Let $T_d$ be the maximal torus of $\GL_d$ that consists of invertible diagonal matrices and let $\PT_d$ be the quotient by the diagonally embedded $\C^\times$. The character group of $T_d$ is the free group generated by $x_{i,r}$ with $i \in Q_0$ and $r=1,\ldots,d_i$ and the character group of $\PT_d$ is the subgroup $X(\PT_d) = \{ \sum_{i,r} a_{i,r}x_{i,r} \mid \sum_{i,r} a_{i,r} = 0 \}$. The equivariant Chow ring $A_{\PT_d}^*(\pt)_\Q$ is the subring $\Q[x_{j,s} - x_{i,r} \mid i,j \in Q_0,\ r=1,\ldots,d_i, s=1,\ldots,d_j]$ of the polynomial ring $A_{T_d}^*(\pt)_\Q = \Q[x_{i,r} \mid i \in Q_0,\ r=1,\ldots,d_i]$. The ring $A_{\PT_d}^*(\pt)_\Q$ is itself a polynomial ring as $X(\PT_d)$ is a free abelian group but we don't want to choose a basis here.
	The equivariant Chow ring $A_{\PGL_d}^*(R(Q,d))_\Q \cong A_{\PGL_d}^*(\pt)_\Q$ agrees with the ring 
	$$
		A_{\PT}^*(\pt)_\Q^{W_d} = \Q[x_{j,s} - x_{i,r} \mid i,j \in Q_0,\ r=1,\ldots,d_i, s=1,\ldots,d_j]^{W_d}
	$$ 
	where $W_d = \prod_i S_{d_i}$ acts by permutation of the variables $x_{i,r}$; these variables are characters of a maximal torus of $\GL_d$ and $W_d$ is the corresponding Weyl group. 
	The ring $A_{\PGL_d}^*(\smash{R(Q,d)^{\theta-\sst}})_\Q$ is a quotient of $A_{\PGL_d}^*(R(Q,d))_\Q$. The kernel of $A_{\PGL_d}^*(R(Q,d))_\Q \to A_{\PGL_d}^*(\smash{R(Q,d)^{\theta-\sst}})_\Q$ can be described by tautological relations in the sense of \cite{Franzen:15:Chow_Ring_Quiv}. A $\theta$-forbidden decomposition\footnote{Note that in \cite{Franzen:15:Chow_Ring_Quiv} $\theta$-stability of a representation $M$ was defined by $\theta(M') > 0$ for every subrepresentation $M'$. We use the opposite sign convention here.} of $d$ is a decomposition $d = p+q$ into dimension vectors for which $\theta(p) > 0$. For such a decomposition we consider the element
	$$
		f^{p,q} = \prod_{\alpha: i \to j} \prod_{r=1}^{p_i} \prod_{s=p_j+1}^{d_j} (x_{j,s} - x_{i,r})
	$$
	and the principal ideal $I^{p,q} = (f^{p,q})$ in the ring $A_{\PT_d}^*(\pt)_\Q^{W_p \times W_q}$. Let
	$$
		\rho^{p,q}: A_{\PT_d}^*(\pt)_\Q^{W_p \times W_q} \to A_{\PT_d}^*(\pt)_\Q^{W_d}
	$$
	be the $A_{\PT_d}^*(\pt)_\Q^{W_d}$-linear map defined by
	$$
		\rho^{p,q}(f) = \sum_\sigma \sigma f \cdot \prod_i \prod_{r=1}^{p_i} \prod_{s=p_i+1}^{d_i} (x_{i,\sigma_i(s)} - x_{i,\sigma_i(r)})^{-1},
	$$
	where $\sigma = (\sigma_i)_i \in W_d$ ranges over all $(p,q)$-shuffles; that means each $\sigma_i$ is a $(p_i,q_i)$-shuffle permutation.
	The arguments in the proof of \cite[Thm.\ 8.1]{FR:15} show
	
	\begin{thm} \label{t:taut}
		The equivariant Chow ring $A_{\PGL_d}^*(\smash{R(Q,d)^{\theta-\sst}})_\Q$ is isomorphic to the quotient of the ring $\smash{A_{\PT_d}^*(\pt)_\Q^{W_d}}$ by the ideal
		$$
			\sum_{p,q} \rho^{p,q}(I^{p,q}).
		$$
	\end{thm}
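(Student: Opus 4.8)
The plan is to reduce the statement to the localization theorem in equivariant Chow theory and to the description of the unstable locus via Harder–Narasimhan (or rather $\theta$-destabilizing) strata, following the pattern of \cite[Thm.\ 8.1]{FR:15}. First I would stratify the complement $R(Q,d) \setminus R(Q,d)^{\theta-\sst}$ into the locally closed subsets $S_{p,q}$ consisting of those representations whose maximal $\theta$-destabilizing subrepresentation has dimension vector $p$ (for each $\theta$-forbidden decomposition $d = p+q$, i.e.\ $\theta(p) > 0$); since $\theta$ takes only finitely many values on sub--dimension vectors, this is a finite stratification and one may order the strata by, say, the value of $\theta(p)$, so that each $S_{p,q}$ is closed in the open complement of the earlier strata. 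The associated long exact sequence of equivariant Chow groups, together with $A_{\PGL_d}^*(R(Q,d))_\Q \cong A_{\PGL_d}^*(\pt)_\Q$ being a (polynomial) ring with no zero-divisors, shows that the kernel of the restriction $A_{\PGL_d}^*(R(Q,d))_\Q \to A_{\PGL_d}^*(R(Q,d)^{\theta-\sst})_\Q$ is the sum of the images of the pushforwards (Gysin maps) along the closed inclusions $\overline{S_{p,q}} \hookrightarrow R(Q,d)$.

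The second step is to identify each such pushforward with $\rho^{p,q}(I^{p,q})$. Here one uses that the stratum $S_{p,q}$ is, up to the $\GL_d$-action, an associated fiber bundle over $R(Q_p,p)^{\theta-\st} \times R(Q_q,q)^{\theta-\st}$-type data with affine-space fibers, the normal bundle being $\bigoplus_{\alpha : i \to j} \Hom(\C^{p_i}, \C^{q_j})$ with the natural parabolic action; its equivariant Euler class, computed by passing to the maximal torus $\PT_d$ via the splitting principle, is exactly $f^{p,q} = \prod_{\alpha : i \to j} \prod_{r=1}^{p_i} \prod_{s = p_j+1}^{d_j} (x_{j,s} - x_{i,r})$. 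Combining the self-intersection formula with the induction-from-parabolic-to-$\GL_d$ description of equivariant Chow rings—which, after restricting to tori, is precisely the shuffle-averaging formula defining $\rho^{p,q}$—yields that the image of the Gysin map from the $(p,q)$-stratum is $\rho^{p,q}\bigl((f^{p,q})\bigr) = \rho^{p,q}(I^{p,q})$. One subtlety is that the closed stratum $\overline{S_{p,q}}$ is larger than $S_{p,q}$, but its further strata contribute ideals already accounted for by other $\rho^{p',q'}(I^{p',q'})$ with $\theta(p') \geq \theta(p)$, so the total kernel is the sum over all forbidden $(p,q)$.

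The main obstacle I anticipate is making the normal-bundle and induction bookkeeping precise: one must carefully track how the parabolic subgroup $P_{p,q} \subseteq \GL_d$ stabilizing the flag type $(p,q)$ acts on the various $\Hom$-spaces, verify that $A_{P_{p,q}}^*(\pt)_\Q = A_{\PT_d}^*(\pt)_\Q^{W_p \times W_q}$, check that the Gysin pushforward along $\GL_d/P_{p,q}$ is given on torus-fixed subrings by the stated shuffle sum with the inverse-Euler-class weights $\prod_i \prod_{r=1}^{p_i} \prod_{s=p_i+1}^{d_i} (x_{i,\sigma_i(s)} - x_{i,\sigma_i(r)})^{-1}$, and confirm that the whole argument descends correctly from $\GL_d$ to $\PGL_d = \GL_d/\C^\times$ (which is harmless since the diagonal $\C^\times$ acts trivially throughout). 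Since the claim is explicitly that "the arguments in the proof of \cite[Thm.\ 8.1]{FR:15}" carry over, the honest content of the proof is to point out that nothing in that argument used properties of the quiver beyond the structure of $R(Q,d)$ as a representation space with $\GL_d$-action, so the $\theta$-forbidden stratification and the resulting tautological presentation go through verbatim; I would state this as a lemma-by-lemma transcription rather than reprove it in full.
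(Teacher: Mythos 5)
Your proposal is correct and follows essentially the same route as the paper, which gives no independent argument but defers entirely to the proof of \cite[Thm.\ 8.1]{FR:15}: excision for the unstable locus, the Euler class $f^{p,q}$ of the normal space to the subspace of flag-preserving representations, and the parabolic-induction (shuffle) pushforward $\rho^{p,q}$, followed by the harmless descent from $\GL_d$ to $\PGL_d$. The only cosmetic difference is that you organize the unstable locus by locally closed HN-type strata, whereas the cited argument works with the closed images of the collapsing maps $\GL_d\times^{P_{p,q}}R'_{p,q}\to R(Q,d)$ indexed by forbidden decompositions; both yield the same generators of the kernel.
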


	Note that \cite[Thm.\ 8.1]{FR:15} deals with the $\GL_d$ equivariant Chow ring of the semi-stable locus but the same arguments can be applied for the $\PGL_d$-equivariant situation as well. 
	
	\begin{rem} \label{r:cyc}
		Let us remark that analogously to \cite[Thm.\ 5.1]{FR:15}, it can be shown that the equivariant cycle map $A_{\PGL_d}^*(\smash{R(Q,d)^{\theta-\sst}})_\Q \to H_{\PGL_d}^*(\smash{R(Q,d)^{\theta-\sst}}; \Q)$ is an isomorphism. In the case that $Q$ is acyclic and $d$ is $\theta$-coprime this is shown in \cite[Thm.\ 3]{KW:95}.
	\end{rem}
	
	\section{Stability Conditions for Subspace Quivers}\label{section:SCSQ}
	
	Let $m \geq 3$. Let $U_m$ be the $m$-subspace quiver. It consists of $m$ sources and one sink and has one arrow pointing from every source to the sink. Let $d = (1,\ldots,1; 2)$, i.e.\ the dimension vector with ones at every source and two at the sink. Pictorially, the quiver setting is
	\begin{center}
	\begin{tikzpicture}[description/.style={fill=white,inner sep=2pt}]
		\matrix(m)[matrix of math nodes, row sep=2em, column sep=1.5em, text height=1.5ex, text depth=0.25ex]
		{
				&		& \bullet	& \\
			\bullet	& \bullet	& \ldots	& \bullet \\
		};
		\node[above] at (m-1-3.north) {$2$};
		\node[below] at (m-2-1.south) {$1$};
		\node[below] at (m-2-2.south) {$1$};
		\node[below] at (m-2-4.south) {\ $1$.};
		\path[->, font=\scriptsize]
		(m-2-1) edge (m-1-3)
		(m-2-2) edge (m-1-3)
		(m-2-4) edge (m-1-3);
	\end{tikzpicture}
	\end{center}
	Let $R = R(U_m,d) \cong (\C^2)^m$. The group $\GL_d$ is $(\C^\times)^m \times \GL_2(\C)$ and it acts on the vector space $R$ via $(t_1,\ldots,t_m,g).(v_1,\ldots,v_m) = (t_1^{-1}gv_1,\ldots,t_m^{-1}gv_m)$. Abbreviate $G = \PGL_d = ((\C^\times)^m \times \GL_2)/\C^\times$.
	
	We fix a stability condition $\theta$ for the $m$-subspace quiver, i.e.\ $\theta$ consists of rationals $\theta_\infty$ and $\theta_1,\ldots,\theta_m$. We assume again that $\theta(d) = 0$, so $\theta_1 + \ldots + \theta_m +2\theta_\infty = 0$. We analyze when $\theta$ is non-trivial---in the sense that the stable locus of $\theta$ is non-empty---when $d$ is coprime for $\theta$, and what it means to be a (generic) deformation.
	
	We can make a couple of reductions. We may, without loss of generality, assume that $\theta_\infty = -1$ (because a stability condition with $\theta_\infty > 0$ must yield an empty stable locus) which then means that $\theta_1 + \ldots + \theta_m = 2$. For the stable locus not to be empty, we must require that 
	$$
		\theta_i < 1. 
	$$
	This is because every representation given by the vectors $(v_1,\ldots,v_m)$ has a subrepresentation of dimension vector $(0,\ldots,1,\ldots, 0; 1)$ (with a one at the \smash{$i$\textsuperscript{th}} source) which is given by the span of the vector $v_i$. On the other hand, replacing $v_i$ with $0$ also yields a subrepresentation of $(v_1,\ldots,v_m)$ of dimension vector $(1,\ldots,0,\ldots,1; 2)$  wherefore this dimension vector must be allowed. It implies $\sum_{j \neq i} \theta_j < 2$. Adding $\theta_i$ to both sides, we get the condition
	$$
		\theta_i > 0.
	$$
	These two conditions are necessary and sufficient for the stable locus to be non-empty. So, we're considering sequences
	$\theta_1,\ldots,\theta_m$ of rational numbers $0 < \theta_i < 1$
	which sum to $2$.
	
	Let's think about coprimality. We can neglect sub--dimension vectors with $d'_\infty = 0$ or $d'_\infty = 2$ as all $\theta_i$'s are between $0$ and $1$ anyways. So we are concerned with sub--dimension vectors $d'$ with $d'_\infty = 1$. Let $I$ denote the subset of those $i$ with $d'_i = 1$. We introduce the symbol $\theta_I$ as an abbreviation for $\sum_{i \in I} \theta_i$ for any subset $I$ of $\{1,\ldots,m\}$. Then, $\theta$-coprimality of $d$ is equivalent to	
	$
		\theta_I \neq \theta_{I^c}
	$
	for every proper, non-empty subset $I \sub \{1,\ldots,m\}$, or equivalently 
	$$
		\theta_I \neq 1.
	$$

	Let $\theta$ and $\theta'$ be two non-trivial stability conditions as above. Then $\theta'$ is a deformation of $\theta$ (with respect to the dimension vector $d = (1,\ldots,1;2)$) if $\theta_I < 1$ implies $\theta'_I < 1$ and if $\theta'_I \leq 1$ implies $\theta_I \leq 1$ for every proper non-empty subset $I \subseteq \{1,\ldots,m\}$.
	
	We summarize the results obtained so far in 
	 
	\begin{lem}
		Let $\theta = (\theta_1,\ldots,\theta_m;-1)$ be a stability condition of $U_m$ given by rational numbers with $\theta_1 + \ldots + \theta_m = 2$.
		\begin{enumerate}
			\item The $\theta$-stable locus is non-empty if and only if $0 < \theta_i < 1$ for every $i = 1,\ldots,m$.
			\item The dimension vector $(1,\ldots,1;2)$ is $\theta$-coprime if and only if $\theta_I := \sum_{i \in I} \theta_i \neq 1$ for every non-empty proper subset $I$ of $\{1,\ldots,m\}$.
			\item Let $\theta'$ be another such stability condition. In this case $\theta'$ is a deformation of $\theta$ with respect to $(1,\ldots,1;2)$ if and only if
			\begin{enumerate}
				\item $\theta_I < 1$ implies $\theta'_I < 1$ and
				\item $\theta'_I \leq 1$ implies $\theta_I \leq 1$
			\end{enumerate}
			for every proper non-empty subset $I \subseteq \{1,\ldots,m\}$.
		\end{enumerate}
	\end{lem}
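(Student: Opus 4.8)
Before turning to the author's argument, here is how I would organize a proof of the lemma.

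\medskip

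\noindent\emph{Overall strategy.} The plan is to unwind King's stability criterion for the quiver $U_m$ with dimension vector $d = (1,\ldots,1;2)$. Since the discussion preceding the lemma already carries out most of this, what remains is to assemble the case analysis over sub--dimension vectors $e$ (which have $e_\infty \in \{0,1,2\}$ and each $e_i \in \{0,1\}$) and to supply the one direction not yet argued, namely the existence of a stable representation in part (1).

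\medskip

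\noindent\emph{Part (1).} The implication ``stable locus non-empty $\Rightarrow 0 < \theta_i < 1$'' is exactly the computation in the preceding discussion: a representation given by vectors $(v_1,\ldots,v_m) \in (\C^2)^m$ always contains a subrepresentation of dimension vector $(0,\ldots,1,\ldots,0;1)$ (a line in $M_\infty$ through $v_i$) and one of dimension vector $(1,\ldots,0,\ldots,1;2)$ (replace $v_i$ by $0$); stability forces $\theta$ of both to be negative, which, using $\theta_\infty = -1$ and $\sum_j \theta_j = 2$, yields $\theta_i < 1$ and $\theta_i > 0$. For the converse I would exhibit a stable representation explicitly: choose $v_1,\ldots,v_m \in \C^2$ pairwise linearly independent (equivalently, $m$ distinct points of $\mathbb{P}^1$). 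Any proper non-zero subrepresentation $M'$ has dimension vector $e$; pairwise independence forces $e = 0$ when $e_\infty = 0$ (an included source would give $v_i = 0$) and $|\{i : e_i = 1\}| \leq 1$ when $e_\infty = 1$ (two included sources would put $v_i,v_j$ in a common line), and in the remaining cases one checks $\theta(e) < 0$ directly from $0 < \theta_i < 1$ and $\sum_j \theta_j = 2$: for $e_\infty = 1$ one gets $\theta(e) = \theta_I - 1 < 0$, and for $e_\infty = 2$ with $e \neq d$ one gets $\theta(e) = \theta_I - 2 < 2 - 2 = 0$ since at least one positive $\theta_j$ is omitted. Hence this representation is $\theta$-stable.

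\medskip

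\noindent\emph{Parts (2) and (3).} For part (2) I would use that $\theta$-coprimality of $d$ means $\theta(e) \neq 0$ for all $e$ with $0 \leq e \leq d$ and $e \neq 0, d$, and observe that only sub--dimension vectors with $e_\infty = 1$ impose a condition: for $e_\infty = 0$ one has $\theta(e) = \theta_I > 0$, and for $e_\infty = 2$, $e \neq d$, one has $\theta(e) = \theta_I - 2 < 0$. For $e_\infty = 1$ we have $\theta(e) = \theta_I - 1$, so $\theta$-coprimality is equivalent to $\theta_I \neq 1$ for all $I \subseteq \{1,\ldots,m\}$; the values $\theta_\emptyset = 0$ and $\theta_{\{1,\ldots,m\}} = 2$ are automatically $\neq 1$, leaving precisely the non-empty proper subsets. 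For part (3) I would specialize the definition of a deformation: for $e$ with $e_\infty \in \{0,2\}$ the quantities $\theta(e)$ and $\theta'(e)$ are, by the computations just made, both positive, resp.\ both negative, so conditions (i) and (ii) of the definition are automatically satisfied for such $e$; only $e_\infty = 1$ remains, and substituting $\theta(e) = \theta_I - 1$ and $\theta'(e) = \theta'_I - 1$ turns condition (i) into ``$\theta_I < 1 \Rightarrow \theta'_I < 1$'' and condition (ii) into ``$\theta'_I \leq 1 \Rightarrow \theta_I \leq 1$'', which is the asserted criterion.

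\medskip

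\noindent\emph{Main obstacle.} I do not anticipate a genuine obstacle: the only point beyond bookkeeping is the existence of a stable representation in part (1), handled by the general-position choice of the $v_i$. The only thing to be careful about is the exhaustive case distinction over sub--dimension vectors and the sign conventions when passing between $\theta$ and the reduced data $(\theta_1,\ldots,\theta_m)$.
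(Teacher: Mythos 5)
Your proposal is correct and follows essentially the same route as the paper, whose ``proof'' of this lemma is just the discussion preceding it in Section 3 (the case analysis over sub--dimension vectors according to $e_\infty\in\{0,1,2\}$). The one point you add beyond the paper is the explicit verification of sufficiency in part (1) via $m$ pairwise linearly independent vectors, which the paper asserts without argument; your check of that construction is correct.
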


	\begin{rem}
		Let $\theta$ be a non-trivial stability condition. For a $\theta$-semi-stable element $(v_1,\ldots,v_m) \in R$, we get $v_i \neq 0$ (as $\theta_i > 0$) for every $i  \in \{1,\ldots,m\}$. This shows that the quotient stack $[R^{\theta-\sst}/G]$ is isomorphic to the quotient stack $[((\P^1)^m)^{\theta-\sst}/\PGL_2]$ of $\theta$-semi-stable point configurations on the projective line. The same holds true for the stable moduli. In this context, a point configuration $(p_1,\ldots,p_m)$ is called $\theta$-semi-stable ($\theta$-stable) if not all $p_i$ (with $i \in I$) agree whenever $I \sub \{1,\ldots,m\}$ is a subset with $\theta_I > 1$ (resp.\ $\theta_I \geq 1$).
	\end{rem}
	
	The above remark shows that the subsets $I \sub \{1,\ldots,m\}$ with $\theta_I > 1$ play an important role. We call those subsets $\theta$-forbidden. Denote by $\mathcal{I}^\theta$ the set of all $\theta$-forbidden subsets.

	There is the so-called canonical, or symmetric, stability condition $\theta^0 = (2/m,\ldots,2/m;-1)$ for the $m$-subspace quiver. Regarding it as a moduli space of point configurations on the projective line, a configuration $(p_1,\ldots,p_m)$ is semi-stable (stable) with respect to $\theta^0$ if no more than $\lfloor m/2 \rfloor$ (resp.\ no more than $\lceil m/2 \rceil-1$) of the $p_i$'s coincide. The canonical stability condition $\theta^0$ is therefore generic if and only if $m$ is odd.
	
	We will deal with the case where $m$ is even. Say $m=2n$. We will consider two generic deformations $\theta^+$ and $\theta^-$ of $\theta^0$ given by
	\begin{align*}
		\theta^+ &= (\frac{1}{n}+\epsilon,\frac{1}{n}-\frac{\epsilon}{2n-1},\ldots,\frac{1}{n}-\frac{\epsilon}{2n-1};-1) \\
		\theta^- &= (\frac{1}{n}-\epsilon,\frac{1}{n}+\frac{\epsilon}{2n-1},\ldots,\frac{1}{n}+\frac{\epsilon}{2n-1};-1)
	\end{align*}
	for a sufficiently small rational number $\epsilon$. We analyze the forbidden subsets for these three stability conditions.
	
	\begin{lem} \label{l:forbidden}
		Let $m = 2n$. For $\epsilon$ sufficiently small $d = (1,\ldots,1;2)$ is coprime for both $\theta^+$ and $\theta^-$ and the sets of forbidden subsets for $\theta^0$, $\theta^+$ and $\theta^-$ are
		\begin{align*}
			\mathcal{I}^{\theta^0} &= \{ I \mid \lvert I \rvert > n \}, \\
			\mathcal{I}^{\theta^+} &= \mathcal{I}^{\theta^0} \sqcup \{ I \mid \lvert I \rvert = n \text{ and } 1 \in I \}, \\
			\mathcal{I}^{\theta^-} &= \mathcal{I}^{\theta^0} \sqcup \{ I \mid \lvert I \rvert = n \text{ and } 1 \notin I \}.
		\end{align*}
		As a consequence $\theta^+$ and $\theta^-$ are generic deformations of $\theta^0$.
	\end{lem}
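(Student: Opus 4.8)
The plan is to evaluate $\theta^+_I$ and $\theta^-_I$ explicitly in terms of $k := \lvert I\rvert$ and of whether the first source lies in $I$. Since the last $2n-1$ entries of $\theta^+$ all equal $\tfrac1n-\tfrac{\epsilon}{2n-1}$, a short computation gives
\[
  \theta^+_I = \begin{cases}
    \dfrac{k}{n} - \dfrac{k\epsilon}{2n-1}, & 1\notin I,\\[1.2ex]
    \dfrac{k}{n} + \dfrac{(2n-k)\epsilon}{2n-1}, & 1\in I,
  \end{cases}
\]
and $\theta^-_I$ is obtained by replacing $\epsilon$ with $-\epsilon$, since $\theta^-$ arises from $\theta^+$ by that substitution. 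The key numerical facts are that $\tfrac kn = 1$ exactly for $k = n$ and that $\lvert \tfrac{k}{n} - 1 \rvert \geq \tfrac1n$ whenever $k\neq n$.

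First I would treat the subsets $I$ with $\lvert I\rvert\neq n$. For these the perturbation terms are $O(\epsilon)$ and cannot move $\theta^\pm_I$ across $1$ once $\epsilon$ is small (a single explicit bound such as $0<\epsilon<\tfrac1n$ suffices for all the finitely many cases), so $\theta^\pm_I>1\iff\lvert I\rvert>n$ and also $\theta^\pm_I\neq 1$. The same, with $\epsilon=0$, gives $\mathcal I^{\theta^0}=\{I\mid\lvert I\rvert>n\}$ (with $\theta^0_I = 1$ in the boundary case $\lvert I\rvert = n$, which is therefore not forbidden). It remains to handle $\lvert I\rvert = n$ for $\theta^\pm$: if $1\in I$ then $\theta^+_I = 1+\tfrac{n\epsilon}{2n-1}>1$ and $\theta^-_I = 1-\tfrac{n\epsilon}{2n-1}<1$, while if $1\notin I$ the inequalities are reversed. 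Putting the two cases together yields the three displayed sets, the unions being disjoint because $\lvert I\rvert>n$ and $\lvert I\rvert = n$ are incompatible; and since we have shown $\theta^\pm_I\neq 1$ for every proper non-empty $I$ (for $\lvert I\rvert = n$ because $1\pm\tfrac{n\epsilon}{2n-1}\neq1$), part (2) of the preceding lemma gives that $d$ is $\theta^+$- and $\theta^-$-coprime.

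For the final assertion I would first note that $\theta^+$ and $\theta^-$ are non-trivial: each entry is $\tfrac1n\pm\epsilon$ or $\tfrac1n\mp\tfrac{\epsilon}{2n-1}$, which lies strictly between $0$ and $1$ for small $\epsilon$ (recall $n\geq2$), so part (1) of the preceding lemma applies. Genericity is exactly the coprimality just established, so it only remains to verify conditions (a) and (b) of part (3) for $\theta = \theta^0$ and $\theta' = \theta^\pm$: if $\theta^0_I<1$, i.e.\ $\lvert I\rvert<n$, then $\theta^\pm_I<1$ by the case analysis above, which is (a); and if $\theta^0_I>1$, i.e.\ $\lvert I\rvert>n$, then $\theta^\pm_I>1$, which is the contrapositive of (b). Equivalently, given coprimality, being a deformation of $\theta^0$ amounts to the inclusions $\mathcal I^{\theta^0}\subseteq\mathcal I^{\theta^\pm}\subseteq\{I\mid\lvert I\rvert\geq n\}$, both immediate from the displayed formulas.

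Everything here is elementary; the only points needing a little care are collecting the finitely many smallness requirements on $\epsilon$ into one bound, and keeping track of the edge case $k = 2n$, where the coefficient $2n-k$ of $\epsilon$ vanishes but $\tfrac kn = 2>1$ still forces $I$ to be forbidden.
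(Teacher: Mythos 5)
Your proposal is correct and follows essentially the same route as the paper: compute $\theta^\pm_I$ explicitly according to whether $1\in I$, observe that for small $\epsilon$ the perturbation cannot move $\theta^\pm_I$ across $1$ unless $\lvert I\rvert=n$, and read off the forbidden sets and coprimality (the paper uses the explicit bound $\epsilon<\tfrac{2n-1}{n(n+1)}$, which your bound $\epsilon<\tfrac1n$ implies for $n\geq 2$). No gaps.
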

	
	\begin{proof}
		If $I \subseteq \{1,\ldots,m\}$ is a subset with $k$ elements then the $\theta^+$-value of $I$ is
		$$
			\theta_I^+ = \begin{cases}
			             	\frac{k}{n} + \frac{2n-k}{2n-1}\epsilon & \text{if } 1 \in I \\
			             	\frac{k}{n} - \frac{k}{2n-1}\epsilon & \text{if } 1 \notin I.
			             \end{cases}
		$$
		If $\epsilon$ is smaller than $\frac{2n-1}{n(n+1)}$, which ensures that $\frac{n-1}{n} + \frac{n+1}{2n-1}\epsilon < 1$ and $\frac{n+1}{n} - \frac{n+1}{2n-1}\epsilon > 1$, then $\theta_I^+ \neq 1$ and, moreover, the set of $\smash{\theta^+}$-forbidden subsets is as asserted. The proof for $\smash{\theta^-}$ works in the same fashion.
	\end{proof}

	\section{Chow Rings of Moduli of Point Configurations}\label{section:CRMPC}
	
	We want to show how Theorem \ref{t:taut} applies to moduli of points on $\P^1$, i.e.\ moduli of representation of $U_m$ with dimension vector $d = (1,\ldots,1;2)$, cf.\ also \cite[Co.\ 29]{Franzen:15:Chow_Ring_Quiv}. The Chow ring $A_{\PT_d}^*(\pt)$ is the subring $\Q[y_j-x_i \mid i=1,\ldots,m,\ j=1,2]$ of the polynomial ring $\Q[x_1,\ldots,x_m,y_1,y_2]$. Denote $y = y_1+y_2$ and $z = y_1y_2$.
	
	\begin{lem} \label{l:Chev}
		The ring $A_{\PGL_d}^*(\pt)_\Q = \Q[y_j-x_i \mid i=1,\ldots,m,\ j=1,2]^{S_2}$ is generated by the algebraically independent elements $X_i = \smash{\frac{1}{2}}y - x_i$ (all $i=1,\ldots,m$) and $Y = \smash{\frac{1}{4}}y^2-z$.
	\end{lem}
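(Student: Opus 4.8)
The plan is to reduce the computation to the elementary description of $\Q[v]^{\Z/2}$ by a linear change of coordinates that diagonalizes the $W_d$-action. Here $W_d=\prod_i S_{d_i}=S_2$, since only the sink carries $d_\infty=2$, and it acts by swapping $y_1$ and $y_2$.

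First I would record that $R:=A_{\PT_d}^*(\pt)_\Q=\Q[y_j-x_i\mid i=1,\dots,m,\ j=1,2]$ is a polynomial ring in $m+1$ variables: the $m+1$ elements $y_1-x_1,\dots,y_1-x_m,y_1-y_2$ generate $R$ as a $\Q$-algebra (each generator $y_j-x_i$ is a $\Q$-linear combination of these) and they are algebraically independent, being part of a coordinate system on $\Q[x_1,\dots,x_m,y_1,y_2]$; equivalently, $R$ is the symmetric $\Q$-algebra of the rank-$(m+1)$ lattice $X(\PT_d)$. Next, set $v:=y_1-y_2$. From $X_i=\tfrac12 y-x_i=\tfrac12(y_1-x_i)+\tfrac12(y_2-x_i)$ and $v=(y_1-x_i)-(y_2-x_i)$ one gets $X_i,v\in R$, and conversely $y_1-x_i=X_i+\tfrac12 v$, $y_2-x_i=X_i-\tfrac12 v$, so $X_1,\dots,X_m,v$ also generate $R$. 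As $m+1$ generators of a polynomial ring of transcendence degree $m+1$ over $\Q$, they are algebraically independent, hence $R=\Q[X_1,\dots,X_m,v]$. Under the swap of $y_1,y_2$, the element $\tfrac12 y$ and every $x_i$ are fixed, so each $X_i$ is fixed, while $v\mapsto -v$.

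Therefore $A_{\PGL_d}^*(\pt)_\Q=R^{S_2}=\Q[X_1,\dots,X_m,v]^{S_2}$. Since $\Q[X_1,\dots,X_m,v]$ is free over $\Q[X_1,\dots,X_m,v^2]$ with basis $\{1,v\}$, and the two summands are exactly the $(+1)$- and $(-1)$-eigenspaces of the involution, the invariant subring equals $\Q[X_1,\dots,X_m,v^2]$; these $m+1$ elements are algebraically independent (for instance because $v^2$ is transcendental over $\Q[X_1,\dots,X_m]$, $v$ being a polynomial variable and integral over $\Q[X_1,\dots,X_m,v^2]$). Finally $v^2=(y_1-y_2)^2=(y_1+y_2)^2-4y_1y_2=y^2-4z=4Y$, so $\Q[v^2]=\Q[Y]$ and $R^{S_2}=\Q[X_1,\dots,X_m,Y]$ with $X_1,\dots,X_m,Y$ algebraically independent.

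The argument is entirely elementary; the only step deserving a moment's care is verifying that the transition between the generating sets $\{y_j-x_i\}$ and $\{X_1,\dots,X_m,v\}$ is invertible over $\Q$, so that $R$ genuinely is the polynomial ring $\Q[X_1,\dots,X_m,v]$ and the $S_2$-action is a reflection in the single coordinate $v$. Once that is in place, the description of the invariants and their algebraic independence is the standard computation of $\Q[v]^{\Z/2}=\Q[v^2]$ tensored with the fixed polynomial variables $X_1,\dots,X_m$.
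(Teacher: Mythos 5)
Your proof is correct, but it follows a different route from the paper's. The paper's argument is the standard ``algebraically independent elements plus Hilbert series'' shortcut: it observes that $X_1,\dots,X_m,Y$ lie in the invariant ring and are algebraically independent, and then concludes by noting that the Poincar\'e series of the invariant ring is $(1-q)^{-m}(1-q^2)^{-1}$, which coincides with that of the polynomial subring they generate (with $m$ generators in degree $1$ and one in degree $2$), forcing equality. You instead diagonalize the $S_2$-action explicitly: after the linear change of coordinates to $X_1,\dots,X_m,v=y_1-y_2$, the involution fixes each $X_i$ and negates $v$, so the invariants are visibly $\Q[X_1,\dots,X_m,v^2]$, and $v^2=4Y$. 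Both arguments are sound; yours is more self-contained (it needs no computation of the generating series, which the paper leaves implicit and which would itself typically be justified by Molien's formula or exactly the even/odd decomposition you use), while the paper's is shorter once that series is taken as known. The one step you rightly flag --- that $\{X_1,\dots,X_m,v\}$ is obtained from $\{y_1-x_1,\dots,y_1-x_m,y_1-y_2\}$ by an invertible $\Q$-linear substitution, so that $R$ really is the polynomial ring on these coordinates --- is verified correctly by your explicit inverse formulas $y_j-x_i=X_i\pm\tfrac12 v$.
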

	
	\begin{proof}
		It is obvious that $X_i$ and $Y$ are elements of the ring in question and it is easy to show that they are algebraically independent. The fact that the generating series of that ring is $(1-q)^{-m}(1-q^2)^{-1}$ concludes the proof.
	\end{proof}
	
	First of all, we consider the moduli stack $\mathcal{M}$ of $m$ points in $\P^1$ up to $\PGL_2$-action. We define $U = \{ (v_1,\ldots,v_m) \in (\C^2)^m \mid v_i \neq 0 \text{ (all $i$)} \}$. It is an open subset of $R(U_m,d) = (\C^2)^m$ and $\mathcal{M}$ is precisely the quotient stack $[U/\PGL_d]$. This shows that $A^*(\mathcal{M})_\Q$ is a quotient of $\Q[X_1,\ldots,X_m,Y]$. We show the following
	
	\begin{lem} \label{l:Chow_P1}
		The Chow ring $A^*(\mathcal{M})_\Q = A_{\PGL_2}^*((\P^1)^m)_\Q$ with rational coefficients is isomorphic to $\Q[X_1,\ldots,X_m,Y]/(X_i^2 - Y \mid i=1,\ldots,m)$. 
	\end{lem}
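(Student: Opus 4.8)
The plan is to use the open embedding $U \hookrightarrow R := R(U_m, d) = (\C^2)^m$ together with the localization sequence in $G$-equivariant Chow theory, where $G = \PGL_d$ as in Section~\ref{section:SCSQ}. As explained in the text before the statement, $A^*(\mathcal M)_\Q = A^*_G(U)_\Q$ is a quotient of $A^*_G(R)_\Q \cong A^*_G(\pt)_\Q = \Q[X_1,\dots,X_m,Y]$ (Lemma~\ref{l:Chev}), so it suffices to compute the kernel of the restriction $A^*_G(R)_\Q \to A^*_G(U)_\Q$. First I would describe the complement: $Z := R \setminus U$ is the union of the subvarieties $Z_i = \{(v_1,\dots,v_m) \in R \mid v_i = 0\}$, $i = 1,\dots,m$, which are its irreducible components; each is smooth and $G$-invariant, and $\iota_i \colon Z_i \hookrightarrow R$ is a regular closed immersion of codimension $2$. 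From the localization exact sequence $A^G_*(Z) \to A^G_*(R) \to A^G_*(U) \to 0$ and the fact that every subvariety of $Z$ is contained in one of the $Z_i$, the kernel in question equals $\sum_{i=1}^m \operatorname{im}\big(\iota_{i*} \colon A^*_G(Z_i)_\Q \to A^{*+2}_G(R)_\Q\big)$, where $\iota_{i*}$ denotes the Gysin pushforward along $\iota_i$.

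Next I would identify each of these ideals. Since $Z_i \cong (\C^2)^{m-1}$ and $R = (\C^2)^m$ are $G$-representations, homotopy invariance of equivariant Chow groups identifies their Chow rings canonically with $A^*_G(\pt)_\Q$, compatibly with $\iota_i$; in particular $\iota_i^*$ is an isomorphism, so by the projection formula $\operatorname{im}(\iota_{i*}) = \big(\iota_{i*}(1)\big) = \big([Z_i]\big)$, the principal ideal generated by the equivariant class of $Z_i$. Now $Z_i$ is the transverse zero locus of the tautological section $(v_1,\dots,v_m) \mapsto v_i$ of the $G$-equivariant vector bundle $R \times \C^2_{(i)} \to R$, where $\C^2_{(i)}$ denotes $\C^2$ with $(\C^\times)^m \times \GL_2$ acting via $(t_1,\dots,t_m,g)\cdot w = t_i^{-1} g w$; hence $[Z_i] = c_2^G(\C^2_{(i)})$. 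The equivariant Chern roots of $\C^2_{(i)}$ are $y_1 - x_i$ and $y_2 - x_i$, so $[Z_i] = (y_1 - x_i)(y_2 - x_i)$.

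Finally I would rewrite this class in the generators of Lemma~\ref{l:Chev}. Substituting $x_i = \tfrac12 y - X_i$ and $z = \tfrac14 y^2 - Y$ into
\[
  (y_1 - x_i)(y_2 - x_i) = z - x_i y + x_i^2
\]
gives $(y_1 - x_i)(y_2 - x_i) = X_i^2 - Y$. Therefore the kernel of $A^*_G(R)_\Q \to A^*_G(U)_\Q$ is the ideal $(X_1^2 - Y, \dots, X_m^2 - Y)$, and we obtain
\[
  A^*(\mathcal M)_\Q \cong \Q[X_1,\dots,X_m,Y]/(X_i^2 - Y \mid i = 1,\dots,m),
\]
as asserted.

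The main obstacle, as I see it, is the first step: showing that the kernel is generated by exactly the classes $[Z_i]$, with no further relations coming from the intersections $Z_i \cap Z_j$. This relies on the fact that $A^G_*(Z)$ is generated as an abelian group by the images of the $A^G_*(Z_i)$ --- which in turn uses that $Z = \bigcup_i Z_i$ is the decomposition of $Z$ into irreducible components, so that any subvariety of $Z$ sits inside a single $Z_i$ --- combined with the isomorphism $\iota_i^*$, which is what turns each summand into the honest principal ideal $\big([Z_i]\big)$ rather than merely a submodule of it.
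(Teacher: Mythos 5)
Your proposal is correct and follows essentially the same route as the paper's proof: both identify the complement of $U$ as the union of the linear subspaces $Z_i$, use the (equivariant) localization sequence to see that the kernel is generated by the pushforwards from the $Z_i$, and compute $[Z_i]=(y_1-x_i)(y_2-x_i)=X_i^2-Y$. You merely spell out in more detail the steps the paper compresses into one sentence (irreducible components, homotopy invariance, projection formula), and your Chern-class computation of $[Z_i]$ agrees with the paper's.
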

	
	\begin{proof}
		The proof works in the same fashion as the proof of \cite[Thm.\ 8.1]{FR:15}. We give it for completeness. The complement of $U$ inside $R = (\C^2)^m$ is the union $\bigcup_{i=1}^m Z_i$ of subspaces $Z_i = \{ (v_1,\ldots,v_m) \mid v_i = 0 \}$. The class $[Z_i]$ in the equivariant Chow ring $A_{\PGL_d}^*(R(U_m,d)) = \Q[y_j-x_i \mid i,j]$ is $(y_1-x_i)(y_2-x_i)$. We see that
		$$
			(y_1-x_i)(y_2-x_i) = z-x_iy+x_i^2 = X_i^2-Y.
		$$
		Using the fact that the map $\bigoplus_i A_{\PGL_d}^*(Z_i)_\Q \to A_{\PGL_d}^*(R(U_m,d))_\Q$ surjects onto the kernel of the map $A_{\PGL_d}^*(R(U_m,d))_\Q \to A^*(\mathcal{M})_\Q$ completes the proof.
	\end{proof}
	
	We consider a non-trivial stability condition $\theta$ given by rational numbers $\theta_1,\ldots,\theta_m$ as described in the previous section. The quotient stack $\mathcal{M}^{\theta-\sst} = [R(U_m,d)^{\theta-\sst}/\PGL_d]$ is an open substack of $\mathcal{M}$. Therefore $A^*(\mathcal{M}^{\theta-\sst})$ is a quotient of $\AA = A^*(\mathcal{M})_\Q = \Q[X_1,\ldots,X_m]/( X_i^2 = X_j^2)$.
	
	For every subset $I \sub \{1,\ldots,m\}$ we define 
	$$
		f^I = \prod_{i \in I} (y_2 - x_i);
	$$
	it is an element of $A_{\PT_d}^*(\pt)_\Q$. Let $I \in \mathcal{I}^\theta$, i.e.\ $I$ is a $\theta$-forbidden subset. For such $I$, the dimension vector $d_I = (d_{I,1},\ldots,d_{I,m};1)$---with $d_{I,1} = 1$ if $i \in I$ and $d_{I,i} = 0$ otherwise---has a positive $\theta$-value, hence gives a $\theta$-forbidden decomposition $d = d_I + (d-d_I)$. The polynomial attached to this forbidden decomposition is precisely $f^I$. Applying Theorem \ref{t:taut} yields that the kernel of $A^*(\mathcal{M})_\Q \to A^*(\mathcal{M}^{\theta-\sst})_\Q$ is generated by the elements
	\begin{align*} 
		\rho(f^I) &= \sum_{J \subsetneqq I} (-1)^{\lvert J \rvert} x_J \sum_{\nu=0}^{\lvert I-J \rvert-1} y_1^\nu y_2^{\lvert I-J \rvert-1-\nu} \\
		\rho(f^I(y_2-y_1)) &= \sum_{J \sub I} (-1)^{\lvert J \rvert} x_J (y_1^{\lvert I-J \rvert} + y_2^{\lvert I-J \rvert})
	\end{align*}
	where $\rho: A_{\PT_d}^*(\pt)_\Q \to A_{\PT_d}^*(\pt)^{S_2}$ is the symmetrization map.
	
	\begin{lem} \label{l:RS}
		Let $I \sub \{1,\ldots,m\}$ be a subset of cardinality $k$. Then 
		\begin{align*}
			\rho(f^I) &= \sum_{\nu=0}^{\lfloor \frac{k-1}{2} \rfloor} e_{k-1-2\nu}(X_i \mid i\in I)Y^\nu &
			\frac{1}{2}\rho(f^I(y_2-y_1)) &= \sum_{\nu=0}^{\lfloor \frac{k}{2} \rfloor} e_{k-2\nu}(X_i \mid i\in I)Y^\nu.
		\end{align*}
	\end{lem}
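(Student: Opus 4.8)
The plan is to replace the pair $y_1,y_2$ by the symmetric and antisymmetric coordinates $\tfrac12 y$ and $w:=\tfrac12(y_2-y_1)$, after which $Y$ becomes literally $w^2$ and everything in sight is a polynomial in the $X_i$ and $w$. First I would record the identities
$$y_1-x_i=X_i-w,\qquad y_2-x_i=X_i+w,\qquad w^2=\tfrac14(y_1+y_2)^2-y_1y_2=\tfrac14 y^2-z=Y,$$
so that $f^I=\prod_{i\in I}(X_i+w)$, and, denoting by $\sigma$ the transposition of $y_1$ and $y_2$ (which fixes every $X_i$ and sends $w\mapsto -w$), $\sigma f^I=\prod_{i\in I}(X_i-w)$.

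Second, I would pin down the symmetrization map. Unwinding the definition of $\rho^{p,q}$ from Section \ref{section:MQR} for the forbidden decomposition $d=d_I+(d-d_I)$: the sink is the only vertex of dimension $>1$, the relevant $(1,1)$-shuffles are the identity and $\sigma$, and their attached factors are $(y_2-y_1)^{-1}$ and $-(y_2-y_1)^{-1}$. Hence $\rho(g)=(g-\sigma g)/(y_2-y_1)=(g-\sigma g)/(2w)$ for every $g\in A_{\PT_d}^*(\pt)_\Q$; plugging in $g=f^I$ and $g=2w\,f^I=f^I(y_2-y_1)$ and expanding in the $x_i,y_j$ reproduces the two displayed formulas stated just before the lemma, so one may equivalently start from those.

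Third is the computation, a single expansion in each case. With $k=\lvert I\rvert$, $e_\ell:=e_\ell(X_i\mid i\in I)$, and $\prod_{i\in I}(X_i+w)=\sum_{\ell=0}^{k}e_\ell\,w^{k-\ell}$, one gets
$$\rho(f^I)=\frac{1}{2w}\sum_{\ell=0}^{k}e_\ell\bigl(w^{k-\ell}-(-w)^{k-\ell}\bigr)=\sum_{\substack{0\le\ell\le k\\ k-\ell\ \text{odd}}}e_\ell\,w^{k-\ell-1}=\sum_{\nu=0}^{\lfloor(k-1)/2\rfloor}e_{k-1-2\nu}\,Y^\nu,$$
the last step being the reindexing $k-\ell=2\nu+1$ together with $w^{2\nu}=Y^\nu$; and likewise
$$\tfrac12\,\rho\bigl(f^I(y_2-y_1)\bigr)=\tfrac12\Bigl(\prod_{i\in I}(X_i+w)+\prod_{i\in I}(X_i-w)\Bigr)=\sum_{\substack{0\le\ell\le k\\ k-\ell\ \text{even}}}e_\ell\,w^{k-\ell}=\sum_{\nu=0}^{\lfloor k/2\rfloor}e_{k-2\nu}\,Y^\nu.$$

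I do not expect a genuine obstacle: the whole content is the substitution $w^2=Y$ and parity bookkeeping for $k-\ell$. The only things worth a little care are the signs in the shuffle factors (so that $\rho(g)=(g-\sigma g)/(2w)$ comes out correctly) and matching the floor ranges $\lfloor(k-1)/2\rfloor$ and $\lfloor k/2\rfloor$ to the parity conditions, which is exactly why I would establish the closed form for $\rho$ before doing the expansion.
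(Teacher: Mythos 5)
Your proof is correct, but it takes a genuinely different route from the paper. The paper proves the two identities by induction on $k$, establishing the recursions $R_k = X_kR_{k-1}+S_{k-1}$ and $S_k = X_kS_{k-1}+YR_{k-1}$ for both sides via a rather lengthy direct expansion in the variables $x_i, y_1, y_2$. You instead diagonalize the $S_2$-action by passing to the coordinates $X_i$ and $w=\tfrac12(y_2-y_1)$, so that $y_2-x_i=X_i+w$, $\sigma$ acts by $w\mapsto -w$, $Y=w^2$, and the symmetrization becomes $\rho(g)=(g-\sigma g)/(2w)$; the lemma then reduces to splitting the expansion $\prod_{i\in I}(X_i+w)=\sum_\ell e_\ell\,w^{k-\ell}$ by the parity of $k-\ell$. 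All the steps check out: the identification of the shuffle factors $\pm(y_2-y_1)^{-1}$ is right, your closed form for $\rho$ reproduces the displayed formulas preceding the lemma, and the reindexing $k-\ell=2\nu+1$ (resp.\ $2\nu$) gives exactly the stated ranges. What your approach buys is a conceptual explanation for the shape of the answer (the $Y^\nu$ are just even powers of $w$) and a much shorter verification; what the paper's induction buys is that it works entirely inside the ring $A_{\PT_d}^*(\pt)_\Q$ without introducing the auxiliary element $w$, which strictly speaking lies outside that subring (only its even powers and its products with other antisymmetric elements do), though this causes no harm since the computation takes place in the ambient polynomial ring.
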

	
	\begin{proof}
		We prove the two equalities asserted in the lemma by induction on $k$. It apparently suffices to check them for the sets $I = \{1,\ldots,k\}$. Let $R_k = \rho(f^{\{1,\ldots,k\}})$ and $S_k = \frac{1}{2}\rho(f^{\{1,\ldots,k\}}(y_2-y_1))$ and denote by $\smash{\tilde{R}}_k$ resp.\ $\smash{\tilde{S}}_k$ the right-hand sides of the equations, i.e.
		\begin{align*}
			R_k &= \sum_{j=0}^{k-1} (-1)e_j(x_1,\ldots,x_k)\sum_{\nu=0}^{k-j-1}y_1^\nu y_2^{k-j-1-\nu}, &
			S_k &= \frac{1}{2} \sum_{j=0}^k (-1)^j e_j(x_1,\ldots,x_k) (y_1^{k-j} + y_2^{k-j}), \\
			\tilde{R}_k &= \sum_{\nu=0}^{\lfloor \frac{k-1}{2} \rfloor} e_{k-1-2\nu}(X_1,\ldots,X_k)Y^\nu, \text{ and} &
			\tilde{S}_k &= \sum_{\nu=0}^{\lfloor \frac{k}{2} \rfloor} e_{k-2\nu}(X_1,\ldots,X_k)Y^\nu.
		\end{align*}
		We see that $R_0 = 0 = \tilde{R}_0$ and $S_0 = 1 = \tilde{S}_1$. Obviously the expressions $\tilde{R}_k$ and $\tilde{S}_k$ satisfy the relations
		\begin{align*}
			\tilde{R}_k &= X_k\tilde{R}_{k-1}+\tilde{S}_{k-1} &
			\tilde{S}_k &= X_k\tilde{S}_{k-1}+Y\tilde{R}_{k-1}.
		\end{align*}
		To complete the proof, it suffices to show that these relations hold for $R_k$ and $S_k$ as well. This is an easy but lengthy computation. We give it here for completeness. 
		\begin{align*}
			X_kR_{k-1} + S_{k-1} =& \bigg( \frac{1}{2}(y_1+y_2) - x_k \bigg)\sum_{j=0}^{k-2} (-1)^j e_j(x_1,\ldots,x_{k-1}) \sum_{\nu=0}^{k-j-2} y_1^\nu y_2^{k-j-2-\nu} \\
			& + \frac{1}{2} \sum_{j=0}^{k-1} (-1)^j e_j(x_1,\ldots,x_{k-1}) (y_1^{k-j-1} + y_2^{k-j-1}) \\
			=& \frac{1}{2} \sum_{j=0}^{k-2} (-1)^j e_j(x_1,\ldots,x_{k-1})\! \underbrace{\big( y_1^{k-j-1} +\!\!\! \sum_{\nu=0}^{k-j-2} (y_1^{\nu+1} y_2^{k-j-2-\nu} + y_1^\nu y_2^{k-j-1-\nu}) + y_2^{k-j-1} \big)}_{=\sum_{\nu=0}^{k-j-1}y_1^\nu y_2^{k-j-1-\nu}} \\
			&- \sum_{j=0}^{k-2} (-1)^j x_ke_j(x_1,\ldots,x_{k-1}) \sum_{\nu=0}^{k-j-2} y_1^\nu y_2^{k-j-2-\nu} \\
			=& \sum_{j=0}^{k-1} (-1)^j \Big( e_j(x_1,\ldots,x_{k-1}) + x_ke_{j-1}(x_1,\ldots,x_{k-1})\Big) \sum_{\nu=0}^{k-j-1} y_1^\nu y_2^{k-j-1-\nu}
		\end{align*}
		which is just $R_k$ when interpreting $e_{-1}(x_1,\ldots,x_{k-1})$ as zero. For $S_k$ the computation reads as follows:
		\begin{align*}
			X_kS_{k-1} + YR_{k-1} =& \bigg( \frac{1}{2}(y_1+y_2) - x_k \bigg) \cdot \frac{1}{2} \sum_{j=0}^{k-1} (-1)^j e_j(x_1,\ldots,x_{k-1}) (y_1^{k-j-1} + y_2^{k-j-1}) \\
			&+ \bigg( \frac{1}{4}(y_1+y_2)^2 - y_1y_2 \bigg) \sum_{j=0}^{k-2} (-1)^j e_j(x_1,\ldots,x_{k-1}) \sum_{\nu=0}^{k-j-2} y_1^\nu y_2^{k-j-2-\nu} \\
			=& \frac{1}{4} \sum_{j=0}^{k-1} (-1)^j e_j(x_1,\ldots,x_{k-1}) (y_1^{k-j} + y_1y_2(y_1^{k-j-2}+y_2^{k-j-2})+ y_2^{k-j}) \\
			&- \frac{1}{2} \sum_{j=0}^{k-1} (-1)^j x_ke_j(x_1,\ldots,x_{k-1}) (y_1^{k-j-1} + y_2^{k-j-1}) \\
			&+ \frac{1}{4} \sum_{j=0}^{k-2} (-1)^j e_j(x_1,\ldots,x_{k-1})\!\! \underbrace{\sum_{\nu=0}^{k-j-2} ( y_1^{\nu+2} y_2^{k-j-2-\nu} - 2y_1^{\nu+1}y_2^{k-j-1-\nu} + y_1^\nu y_2^{k-j-\nu} )}_{=y_1^{k-j}-y_1y_2(y_1^{k-j-2} + y_2^{k-j-2}) + y_2^{k-j}} \\
			=& \frac{1}{2} \sum_{j=0}^k (-1)^j \Big( e_j(x_1,\ldots,x_{k-1} + x_ke_{j-1}(x_1,\ldots,x_{k-1}) \Big) (y_1^{k-j} + y_2^{k-j}).
		\end{align*}
		This equals $S_k$. Here we formally need to set $e_k(x_1,\ldots,x_{k-1}) = 0$. The lemma is proved.
	\end{proof}

	We put $R_I = \rho(f^I)$ and $S_I = \frac{1}{2}\rho((y_2-y_1)f^I)$. Note also that the relations 
	\begin{align*}
		R_I &= X_iR_{I-\{i\}}+S_{I-\{i\}} &
		S_I &= X_iS_{I-\{i\}}+YR_{I-\{i\}}
	\end{align*}
	imply that we may restrict to minimal forbidden subsets $I$, i.e.\ minimal elements of $\mathcal{I}^{\theta}$ with respect to inclusion. Denote the set of those minimal forbidden subsets with $\mathcal{I}_{\min}^\theta$.
	
	\begin{thm} \label{t:gens}
		The Chow ring $A^*(\mathcal{M}^{\theta-\sst})_\Q$ is the quotient of $\mathcal{A} = \Q[X_1,\ldots,X_m,Y]/(X_i^2-Y)$ whose ideal is generated by the elements 
		\begin{align*}
			R_I &= \sum_{\nu=0}^{\lfloor \frac{k-1}{2} \rfloor} e_{k-1-2\nu}(X_i \mid i\in I)Y^\nu \text{ and} \\
			S_I &= \sum_{\nu=0}^{\lfloor \frac{k}{2} \rfloor} e_{k-2\nu}(X_i \mid i\in I)Y^\nu,
		\end{align*}
		with $I \in \mathcal{I}_{\min}^\theta$.
	\end{thm}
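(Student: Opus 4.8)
The plan is to combine Theorem~\ref{t:taut}, the explicit calculation of the tautological generators carried out in Lemma~\ref{l:Chow_P1}, the formulas of Lemma~\ref{l:RS}, and the reduction to minimal forbidden subsets. By Lemma~\ref{l:Chow_P1} the Chow ring of the ambient stack $\mathcal{M}$ is $\mathcal{A} = \Q[X_1,\ldots,X_m,Y]/(X_i^2-Y)$, and the open substack $\mathcal{M}^{\theta-\sst}$ has Chow ring a quotient of $\mathcal{A}$. So the whole content is to identify the kernel of $A^*(\mathcal{M})_\Q \to A^*(\mathcal{M}^{\theta-\sst})_\Q$ with the ideal generated by the $R_I$ and $S_I$ for $I \in \mathcal{I}_{\min}^\theta$.

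First I would spell out how Theorem~\ref{t:taut} applies in this setting. The $\theta$-forbidden decompositions $d = p+q$ with $\theta(p)>0$ are, up to the obvious sign symmetry, exactly those with $p = d_I$ for a $\theta$-forbidden subset $I$ (the components with $d'_\infty \in \{0,2\}$ contribute nothing since $0<\theta_i<1$, as observed in Section~\ref{section:SCSQ}). For such a decomposition the polynomial $f^{p,q}$ of the general theory specializes to $f^I = \prod_{i\in I}(y_2 - x_i)$, and the principal ideal $I^{p,q}$ inside $A_{\PT_d}^*(\pt)_\Q^{W_p\times W_q}$ is generated, after passing to $S_2$-invariants in the sink variables $y_1,y_2$ (the factor $S_2$ being the only nontrivial part of $W_p$ and $W_q$ here since all source entries are $0$ or $1$), by the two elements $f^I$ and $(y_2-y_1)f^I$; applying the shuffle map $\rho^{p,q}$ then produces $R_I = \rho(f^I)$ and $S_I = \tfrac12\rho((y_2-y_1)f^I)$ in $\mathcal{A}$. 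I would then record the explicit expansions of $\rho(f^I)$ and $\rho(f^I(y_2-y_1))$ in the $x_i,y_1,y_2$ variables (the two displayed formulas preceding Lemma~\ref{l:RS}), and invoke Lemma~\ref{l:RS} to rewrite these in the $X_i,Y$ generators as the stated symmetric-function expressions. Thus the kernel ideal is generated by $\{R_I, S_I : I \in \mathcal{I}^\theta\}$.

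Next I would pass from all forbidden subsets to minimal ones. The recursions $R_I = X_i R_{I\setminus\{i\}} + S_{I\setminus\{i\}}$ and $S_I = X_i S_{I\setminus\{i\}} + Y R_{I\setminus\{i\}}$ (valid for any $i\in I$, already noted after Lemma~\ref{l:RS}) show by induction on $|I|$ that whenever $I' \subsetneq I$ the elements $R_I, S_I$ lie in the ideal generated by $R_{I'}, S_{I'}$ together with the ring $\mathcal{A}$. Since $\mathcal{I}^\theta$ is upward closed under inclusion, every $I \in \mathcal{I}^\theta$ contains some $I_0 \in \mathcal{I}_{\min}^\theta$, and hence $R_I, S_I \in (R_{I_0}, S_{I_0}) \subseteq (R_J, S_J : J\in\mathcal{I}_{\min}^\theta)$. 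This gives the reverse containment and identifies the kernel ideal with the one generated by minimal forbidden subsets, completing the proof.

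I do not expect a serious obstacle: this theorem is essentially a bookkeeping consequence of Theorem~\ref{t:taut} and Lemmas~\ref{l:Chow_P1} and \ref{l:RS}. The one point that needs genuine care is the specialization step — verifying that in the $U_m$ case the general shuffle-symmetrization data of Theorem~\ref{t:taut} really collapses to exactly the two generators $f^I$ and $(y_2-y_1)f^I$ of $I^{p,q}$ modulo the $S_2$-action on $(y_1,y_2)$, so that no further relations are lost or spuriously introduced; concretely one must check that $A_{\PT_d}^*(\pt)_\Q^{W_p\times W_q}$ is free of rank $2$ over $A_{\PT_d}^*(\pt)_\Q^{W_d}$ with basis $\{1, y_2-y_1\}$ and that $f^{p,q}$ corresponds to $f^I$ under this identification. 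Once that is in hand, the remainder is the routine reduction to $\mathcal{I}_{\min}^\theta$ described above.
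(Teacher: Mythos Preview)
Your proposal is correct and follows essentially the same route as the paper's own proof, which simply says: apply Lemmas~\ref{l:Chev}, \ref{l:Chow_P1}, \ref{l:RS} to Theorem~\ref{t:taut} to obtain $\mathcal{A}/(R_I,S_I \mid I\in\mathcal{I}^\theta)$, then use the recursions $R_I = X_iR_{I\setminus\{i\}}+S_{I\setminus\{i\}}$ and $S_I = X_iS_{I\setminus\{i\}}+YR_{I\setminus\{i\}}$ to reduce to $\mathcal{I}_{\min}^\theta$. One small refinement: your remark that decompositions with $p_\infty\in\{0,2\}$ ``contribute nothing since $0<\theta_i<1$'' is slightly imprecise for $p_\infty=0$, which \emph{is} forbidden but whose polynomial $f^{p,q}=\prod_{i\in I}(y_1-x_i)(y_2-x_i)$ already lies in the ideal $(X_i^2-Y)$ defining $\mathcal{A}$; the case $p_\infty=2$ is indeed never forbidden for the reason you give.
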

	
	\begin{proof}
		Applying Lemmas \ref{l:Chev}, \ref{l:Chow_P1}, and \ref{l:RS} to Theorem \ref{t:taut} yields $A^*(\mathcal{M}^{\theta-\sst})_\Q = \mathcal{A}/(R_I, S_I \mid I \in \mathcal{I}^\theta)$. The above relations show that $I \in \smash{\mathcal{I}_{\min}^\theta}$ suffice.
	\end{proof}

	We show how this applies to the stability conditions $\theta^0$, $\theta^+$ and $\theta^-$ in the case that $m = 2n$. As the semi-stable moduli stacks of $\theta^+$ and $\theta^-$ are---by genericity of the stability conditions---actually varieties, we denote them by $M^{\theta^+}$ and $M^{\theta^-}$.
	A combination of the previous theorem with Lemma \ref{l:forbidden} yields
	
	\begin{cor} \label{c:pm}
		The rings $\AA^0 = A^*(\mathcal{M}^{\theta^0-\sst})_\Q$ and $\AA^\pm = A^*(M^{\theta^\pm})_\Q$ are quotients of the ring $\AA = \Q[X_1,\ldots,X_{2n},Y]/(X_i^2-Y)$ by ideals $\fra^0$ and $\fra^\pm$ which are given by
		\begin{align*} 
			\fra^0 &= (R_I, S_I \mid I \sub \{1,\ldots,2n\},\ \lvert I \rvert = n+1), \\
			\fra^+ &= (R_I, S_I \mid I \sub \{1,\ldots,2n\},\ \lvert I \rvert = n,\ 1 \in I) + \fra^0, \\
			\fra^- &= (R_I, S_I \mid I \sub \{1,\ldots,2n\},\ \lvert I \rvert = n,\ 1 \notin I) + \fra^0.
		\end{align*}
	\end{cor}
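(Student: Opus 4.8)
The plan is to derive Corollary~\ref{c:pm} by simply feeding the combinatorial data of Lemma~\ref{l:forbidden} into Theorem~\ref{t:gens}, together with the reduction to minimal forbidden subsets that was recorded just before the theorem. First I would recall that by Lemma~\ref{l:forbidden}, for $\epsilon$ sufficiently small the dimension vector $d = (1,\ldots,1;2)$ is $\theta^{\pm}$-coprime, so that the semi-stable loci for $\theta^{+}$ and $\theta^{-}$ contain no strictly semi-stable points; hence $\mathcal M^{\theta^{\pm}-\sst}$ are varieties, and writing $M^{\theta^{\pm}}$ for them is legitimate. Theorem~\ref{t:gens} then applies verbatim to all three stability conditions and identifies $A^{*}(\mathcal M^{\theta-\sst})_{\Q}$ with the quotient of $\AA = \Q[X_{1},\ldots,X_{2n},Y]/(X_{i}^{2}-Y)$ by the ideal generated by $R_{I}, S_{I}$ for $I \in \mathcal I^{\theta}_{\min}$.

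The only remaining point is to compute the sets $\mathcal I^{\theta}_{\min}$ of minimal forbidden subsets from the description of $\mathcal I^{\theta}$ in Lemma~\ref{l:forbidden}. For $\theta^{0}$ we have $\mathcal I^{\theta^{0}} = \{ I : \lvert I\rvert > n \}$, whose minimal elements are exactly the subsets of cardinality $n+1$; this gives the generators of $\fra^{0}$. For $\theta^{+}$ the forbidden set is $\mathcal I^{\theta^{0}}$ together with all $I$ with $\lvert I \rvert = n$ and $1 \in I$; a set $I$ with $\lvert I \rvert = n$ and $1 \in I$ cannot contain a forbidden proper subset (any proper subset has fewer than $n$ elements, hence is not in $\mathcal I^{\theta^{0}}$, and it cannot be one of the new size-$n$ sets since it is strictly smaller), so all such $I$ are minimal; and the size-$(n+1)$ sets remain minimal among the rest. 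Hence $\mathcal I^{\theta^{+}}_{\min} = \{ I : \lvert I\rvert = n,\ 1 \in I\} \cup \{ I : \lvert I \rvert = n+1\}$, which is precisely the generating set for $\fra^{+} = (R_{I},S_{I} : \lvert I\rvert = n,\ 1\in I) + \fra^{0}$. The argument for $\theta^{-}$ is identical with ``$1 \in I$'' replaced by ``$1 \notin I$''.

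There is essentially no obstacle here: the statement is a direct specialization of Theorem~\ref{t:gens}. The one thing worth stating carefully is why, in forming $\fra^{\pm}$, it is harmless to keep \emph{all} size-$(n+1)$ sets as generators rather than only those not containing a minimal size-$n$ forbidden set; this is because the relations $R_{I} = X_{i}R_{I-\{i\}} + S_{I-\{i\}}$ and $S_{I} = X_{i}S_{I-\{i\}} + Y R_{I-\{i\}}$ show that $R_{I}, S_{I}$ already lie in the ideal generated by $R_{I'}, S_{I'}$ for any forbidden $I' \subseteq I$, so adjoining the redundant generators does not change the ideal. With that remark in place, the displayed formulas for $\fra^{0}$, $\fra^{+}$, $\fra^{-}$ follow immediately, completing the proof.
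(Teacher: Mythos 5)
Your proposal is correct and is essentially the paper's own (one-line) argument: the corollary is obtained by feeding the forbidden sets from Lemma~\ref{l:forbidden} into Theorem~\ref{t:gens}, using the recursions $R_I = X_iR_{I-\{i\}}+S_{I-\{i\}}$, $S_I = X_iS_{I-\{i\}}+YR_{I-\{i\}}$ to see that redundant generators do not change the ideal. (Minor quibble: your displayed ``$\mathcal I^{\theta^+}_{\min}$'' is not literally the set of minimal forbidden subsets, since size-$(n+1)$ sets containing $1$ are not minimal for $\theta^+$, but your closing remark on harmless redundant generators already disposes of this.)
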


	\section{Automorphisms}\label{section:A}
		
	Consider $\AA = \Q[X_1,\ldots,X_m,Y]/(X_i^2 - Y) = \Q[X_1,\ldots,X_m]/(X_i^2 = X_j^2)$. We want to determine the automorphism group of this graded ring. For this, consider the following automorphisms of the polynomial ring $\Q[X_1,\ldots,X_m]$.
	\begin{itemize}
		\item For a non-zero rational $d$, we denote the dilation with $d$ with $m_d$.
		\item Let $\sigma \in S_m$ be a permutation. The automorphism that sends $X_i$ to $X_{\sigma(i)}$ will be called $\pi_\sigma$.
		\item Given $i \in \{1,\ldots,m\}$, let $\tau_i$ be defined by $\tau_i(f) = f(X_1,\ldots,-X_i,\ldots,X_m)$.
	\end{itemize}
	
	We verify immediately that the above-mentioned automorphisms of the polynomial ring leave the ideal $(X_i^2-X_j^2 \mid i,j=1,\ldots,m)$ invariant. Hence they descend to automorphisms of the ring $\AA$. We denote them with the same symbol. The rest of the section will be devoted to the proof of 
		
	\begin{prop} \label{p:aut}
		If $m > 2$ then the group $\Aut(\AA)$ is generated by the elements
		\begin{itemize}
			\item $m_d$ ($d \in \Q^\times$),
			\item $\pi_\sigma$ (where $\sigma \in S_m$),
			\item $\tau_i$ ($i \in \{1,\ldots,m\}$).
		\end{itemize}
	\end{prop}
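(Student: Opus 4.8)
The plan is to analyze the graded ring $\AA = \Q[X_1,\dots,X_m]/(X_i^2 - X_j^2)$ by first understanding its structure well enough to pin down what a graded automorphism can do in degree $1$. Write $V = \AA_1$ for the degree-one part; since $X_1,\dots,X_m$ are linearly independent in $\AA$ (the defining relations are in degree $2$), $V$ has the basis $X_1,\dots,X_m$ and any graded automorphism $\varphi$ restricts to a linear automorphism of $V$, hence is determined by a matrix $A = (a_{ij})$ with $\varphi(X_j) = \sum_i a_{ij} X_i$. The key constraint is that $\varphi$ must carry the ideal of relations to itself: the relations $X_i^2 = X_j^2$ say precisely that $\AA_2$ is spanned by the single class $w := X_1^2 = \dots = X_m^2$ together with the products $X_iX_j$ for $i \neq j$, and these $\binom{m}{2}+1$ elements are a basis of $\AA_2$ (one checks the Hilbert series gives $\dim \AA_2 = \binom{m+1}{2} - (m-1) = \binom{m}{2}+1$, matching Lemma \ref{l:Chev}). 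So the whole problem is: which $A \in \GL_m(\Q)$ have the property that $\varphi(X_i)^2 - \varphi(X_j)^2 = 0$ in $\AA$ for all $i,j$?

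Next I would turn this into an explicit condition on $A$. Expanding $\varphi(X_i)^2 = \sum_{k} a_{ki}^2 w' + \sum_{k<l} 2 a_{ki}a_{li} X_kX_l$ where $X_k^2$ must be rewritten: since $X_k^2 = w$ for all $k$, we get $\varphi(X_i)^2 = \big(\sum_k a_{ki}^2\big) w + \sum_{k \neq l} a_{ki}a_{li}\,X_kX_l$ (using $X_kX_l$ symmetric). Because $w, \{X_kX_l\}_{k<l}$ are linearly independent in $\AA_2$, the condition $\varphi(X_i)^2 = \varphi(X_j)^2$ for all $i,j$ forces, for each fixed pair $k<l$, the quantity $a_{ki}a_{li}$ to be independent of $i$, and likewise $\sum_k a_{ki}^2$ independent of $i$. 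So the core lemma to prove is: if $A \in \GL_m(\Q)$ satisfies ``$a_{ki}a_{li}$ does not depend on $i$, for every $k \neq l$,'' then $A$ is a monomial matrix, i.e. $A = P D$ for a permutation matrix $P$ and a diagonal matrix $D$ — and then the extra condition $\sum_k a_{ki}^2$ constant forces the nonzero entries of $D$ to all have equal squares, i.e. $D = d \cdot \mathrm{diag}(\pm 1,\dots,\pm 1)$. This last step is exactly what identifies the group with the one generated by $m_d$, $\pi_\sigma$, $\tau_i$: the permutation is $P$, the signs give the $\tau_i$'s, the common scalar $d$ gives $m_d$.

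The main obstacle is the combinatorial lemma that ``$a_{ki}a_{li}$ independent of $i$ for all $k\neq l$'' plus invertibility implies $A$ is monomial; this is where the hypothesis $m > 2$ is essential (for $m = 2$ there are extra automorphisms, e.g. $X_1 \mapsto X_1+X_2,\ X_2\mapsto X_1 - X_2$, since then there is no third index to play off against). I would argue it as follows. Fix a column $i_0$ with a nonzero entry $a_{k_0 i_0} \neq 0$ in row $k_0$. For any other row $k \neq k_0$: the product $a_{k_0 i}a_{k i}$ is the same for every $i$; in particular it equals its value at $i = i_0$. Now suppose some column $i_1 \neq i_0$ has $a_{k_0 i_1} = 0$; then $a_{k_0 i_1} a_{k i_1} = 0$, so the constant value of $a_{k_0 i}a_{k i}$ is $0$, whence $a_{k_0 i_0} a_{k i_0} = 0$ and thus $a_{k i_0} = 0$. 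Pushing this through all rows $k \neq k_0$ shows column $i_0$ has a single nonzero entry (in row $k_0$) — provided some other column vanishes in row $k_0$. If instead $a_{k_0 i} \neq 0$ for every column $i$, a short further argument using two distinct rows and invertibility (two rows cannot be proportional) yields a contradiction when $m>2$: from $a_{k_0 i}a_{ki}$ and $a_{k_0 i}a_{k'i}$ both constant in $i$ one gets $a_{ki}/a_{k'i}$ constant, so rows $k,k'$ are proportional on all columns, contradicting $\det A \neq 0$ as soon as there are $\geq 2$ such rows, i.e. $m \geq 3$. Iterating the ``single nonzero entry per column'' conclusion over all columns — and using invertibility to see the nonzero positions form a permutation — gives that $A$ is monomial. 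Once that is in hand, the rest (reading off that $\varphi \in \langle m_d, \pi_\sigma, \tau_i\rangle$, and conversely that all these do preserve the ideal, which was already observed) is immediate, and it should be noted that these generators do satisfy relations (e.g. $m_{-1} = \tau_1\cdots\tau_m$), but the proposition only claims generation, so no presentation is needed.
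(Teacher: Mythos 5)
Your proposal is correct and follows essentially the same route as the paper: express a graded automorphism by a matrix $A$, extract from $\varphi(X_i)^2=\varphi(X_j)^2$ the conditions that the off-diagonal products $a_{ki}a_{li}$ and the column sums $\sum_k a_{ki}^2$ are independent of the column, and then show by an elementary argument using invertibility and $m\geq 3$ that $A$ must be a monomial matrix, hence a signed permutation up to dilation. Your case analysis for the monomiality step (fixing a nonzero entry and splitting on whether its row vanishes somewhere) is organized slightly differently from the paper's (which assumes a column with two nonzero entries and shows all columns would be supported on the same two rows), but both are sound and of the same nature.
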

	
	\begin{proof}
		Let $\phi$ be an automorphism of $\AA$. It is given by an invertible matrix $A = (a_{ij}) \in \GL_m(\Q)$---that means $\phi(X_j) = \sum_i a_{ij}X_i$---such that $\phi(X_{j_1})^2 - \phi(X_{j_2})^2$ is contained in the ideal generated by the expressions $X_{i_1}^2 - X_{i_2}^2$. We compute
		\begin{align*}
			\phi(X_{j_1})^2 - \phi(X_{j_2})^2 =& \sum_i (a_{ij_1}^2 - a_{ij_2}^2)x_i^2 + 2\sum_{i_1 < i_2} (a_{i_1j_1}a_{i_2j_1} - a_{i_1j_2}a_{i_2j_2})x_{i_1}x_{i_2}.
		\end{align*}
		From this we deduce the relations
		\begin{enumerate}
			\item[(a)] $\sum_i a_{ij_1}^2 = \sum_i a_{ij_2}^2$
			\item[(b)] $a_{i_1j_1}a_{i_2j_1} = a_{i_1j_2}a_{i_2j_2}$
		\end{enumerate}
		for all $i_1 < i_2$ and all $j_1 < j_2$.  
		
		We assume there were an index $j$ for which the $j$\textsuperscript{th} column contains two non-zero entries, say $a_{i_1j}a_{i_2j} \neq 0$. From relation (b) we deduce that $a_{i_1j'}a_{i_2j'} \neq 0$ for every other column index $j'$ and
		$$
		a_{i_2j'} = \frac{a_{i_1j}a_{i_2j}}{a_{i_1j'}}. 
		$$
		Suppose there were a third non-zero entry $a_{i_3j}$ in the $j$\textsuperscript{th} column. We apply relation (b) for $i_1, i_3$ and $i_2, i_3$ and obtain
		\begin{align*}
			a_{i_3j'} &= \frac{a_{i_1j}a_{i_3j}}{a_{i_1j'}},  &
			a_{i_3j'} &= \frac{a_{i_2j}a_{i_3j}}{a_{i_2j'}} 
				= \frac{a_{i_1j'}a_{i_3j}}{a_{i_1j}}
		\end{align*}
		so consequently $a_{i_1j} = a_{i_1j'}$ and in a similar vein $a_{i_2j} = a_{i_2j'}$ and $a_{i_3j} = a_{i_3j'}$. As $i_3$ was chosen arbitrarily, we deduce that the $j'$\textsuperscript{th} column would have to be equal to the $j$\textsuperscript{th} which contradicts the fact that $A$ is invertible. This shows that under the assumption that there were a column which contains more than one non-zero entry, it would have to have precisely two and every other column would have precisely two non-vanishing entries in the exact same positions. This is absurd because the matrix $A$ is assumed to have at least 3 columns.
		
		Summarizing, $A$ is a matrix with at most one non-zero entry in every column. As the column sums are all the same by relation (a), we can apply a dilation to make it a matrix with entries $0$ or $\pm 1$. By regularity of $A$, every row of $A$ has also precisely one non-zero entry. Therefore, up to the application of some $\tau_i$'s, it is a permutation matrix. The proposition is proved.
	\end{proof}

	\section{The Ring Structure}\label{section:RS}
	
	This section is devoted to the proof of 
	
	\begin{thm}\label{t:non-isom}
		The rings $A^*(M^{\theta^+})_\Q$ and $A^*(M^{\theta^-})_\Q$ are not isomorphic if $n \geq 3$.
	\end{thm}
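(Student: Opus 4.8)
The plan is to assume a graded ring isomorphism $\phi\colon\AA^+\to\AA^-$ and derive a contradiction, treating $n\geq 4$ and $n=3$ by different means. In both cases I would start from the observation that, by Corollary~\ref{c:pm}, the ideals $\fra^\pm$ are generated in degrees $\geq n-1\geq 2$, so $(\AA^\pm)_1=\AA_1=\Q X_1\oplus\cdots\oplus\Q X_{2n}$; since $\AA$, hence $\AA^\pm$, is generated in degree $1$, $\phi$ is determined by the linear isomorphism $\phi_1\colon\AA_1\to\AA_1$ it induces.

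For $n\geq 4$ the crucial point is that $(\fra^\pm)_2=0$. Then $\phi(X_i)^2-\phi(X_j)^2$ lies in $(\fra^-)_2=0$, so it already vanishes in $\AA$; hence $X_i\mapsto\phi(X_i)$ defines a graded automorphism $\psi$ of $\AA$ lifting $\phi$, with $\psi(\fra^+)=\fra^-$. By Proposition~\ref{p:aut}, after replacing $\psi$ by $m_{1/d}\circ\psi$ for a suitable $d$ (still mapping $\fra^+$ onto $\fra^-$, as these ideals are homogeneous), I may assume $\psi=\pi_\sigma\circ\tau$ with $\sigma\in S_{2n}$ and $\tau(X_i)=\epsilon_iX_i$, $\epsilon_i\in\{\pm 1\}$. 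Then $\psi(\fra^+)=\fra^-$ is equivalent to $\tau(\fra^+)=\pi_{\sigma^{-1}}(\fra^-)$, and since $\fra^0$ is $S_{2n}$-invariant the right-hand side is $\fra^0+\langle R_J,S_J\mid \lvert J\rvert=n,\ a\notin J\rangle$ with $a:=\sigma^{-1}(1)$. Comparing degree-$(n-1)$ parts — where $\fra^0$ and $\tau(\fra^0)$, being generated in degrees $\geq n$, contribute nothing — this forces, inside $\AA_{n-1}$,
$$
\mathrm{span}\{\tau(R_I)\mid \lvert I\rvert=n,\ 1\in I\}=\mathrm{span}\{R_J\mid \lvert J\rvert=n,\ a\notin J\}.
$$
Writing $R_I=\tfrac{1}{2w}\bigl(\prod_{i\in I}(X_i+w)-\prod_{i\in I}(X_i-w)\bigr)$ with $w^2=Y$ as in Lemma~\ref{l:RS} and splitting $I=I_+\sqcup I_-$ according to the sign of $\epsilon$, one computes $\tau(R_I)=(-1)^{\lvert I_-\rvert}\bigl(R_{I_+}S_{I_-}-S_{I_+}R_{I_-}\bigr)$; reducing modulo $Y$, i.e. in $\AA/(Y)=\Q[X_1,\ldots,X_{2n}]/(X_i^2)$, this becomes $\pm\sum_{i\in I}\epsilon_i\,X_{I\setminus\{i\}}$, a $\{\pm 1\}$-combination supported on all $(n-1)$-subsets of $I$, whereas the right-hand side reduces to a subspace of the span of the squarefree monomials $X_T$ with $a\notin T$. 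Choosing an $n$-subset $I$ with $1\in I$ and $a\in I$ (possible since $n\geq 2$) and any $i\in I\setminus\{a\}$, the reduction of $\tau(R_I)$ has a nonzero coefficient on $X_{I\setminus\{i\}}$, and $a\in I\setminus\{i\}$; so the left-hand side is not contained in the ambient subspace of the right-hand side. This contradiction disposes of the case $n\geq 4$.

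For $n=3$ I would instead use the invariant ``$\AA^\pm$ contains a nonzero $\xi$ in degree $1$ with $\xi^2=0$'', which is clearly preserved by graded isomorphisms. Here $(\fra^\pm)_2=\langle R_I\mid \lvert I\rvert=3,\ 1\in I\ (\text{resp.\ }1\notin I)\rangle$ and $R_I=e_2(X_i\mid i\in I)+Y$ by Lemma~\ref{l:RS}. On the $\theta^-$-side, $\xi=X_2+X_3+X_4+X_5$ satisfies $\xi^2=2\,e_2(X_2,X_3,X_4,X_5)+4Y=\sum_J\bigl(e_2(X_i\mid i\in J)+Y\bigr)$, the sum over the four $3$-subsets $J$ of $\{2,3,4,5\}$; as $1\notin J$ for each of them, $\xi^2\in(\fra^-)_2$, hence $\xi^2=0$ in $\AA^-$. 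On the $\theta^+$-side, writing $\xi=\sum_i a_iX_i$ and imposing $\xi^2\in(\fra^+)_2$, one reads off coefficient by coefficient (using that for $2\leq i<j$ the monomial $X_iX_j$ occurs in $e_2(X_k\mid k\in I)+Y$ only when $I=\{1,i,j\}$) a small system of quadratic equations in the $a_i$; a short computation shows its only rational solution is $a_1=\ldots=a_6=0$. Thus $\AA^+$ has no nonzero square-zero element in degree $1$, so $\AA^+\not\cong\AA^-$ also when $n=3$.

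The step I expect to be the real obstacle is passing from the abstract isomorphism to workable data. For $n\geq 4$ that is exactly the reduction to a signed permutation via Proposition~\ref{p:aut}, which hinges on $(\fra^\pm)_2=0$; once $\psi$ is a signed permutation everything becomes an elementary combinatorial comparison of the lowest graded pieces. For $n=3$ this route is blocked, because $\fra^\pm$ already carries relations in degree $2$ and so $\Aut(\AA)$ no longer controls the situation; one must argue by hand, and checking the non-existence of a square-zero degree-$1$ element in $\AA^+$ is the one genuine computation there.
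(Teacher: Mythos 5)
Your proposal is correct, and its two halves relate to the paper differently. The $n=3$ case is essentially the paper's own argument: the same square-zero element $X_2+X_3+X_4+X_5$ in $\AA^-$, and the same coefficient-by-coefficient check that $\AA^+$ has no nonzero square-zero element of degree one (you only sketch that finite computation, but it is exactly the system $\sum_i a_i^2 = 2\sum_{i<j,\,i,j\geq 2}a_ia_j$ and $a_i(a_1+a_i-\sum_{j\geq 2}a_j)=0$ that the paper solves, so nothing is missing in substance). For $n\geq 4$ you and the paper both use the degree bound $(\fra^\pm)_2=0$ to descend $\phi$ to an automorphism of $\AA$ and then invoke Proposition~\ref{p:aut}, but from there the routes diverge. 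The paper extracts only the weakest consequence of the classification --- that the ideal $(Y)$ is preserved --- and then distinguishes $\BB^\pm=\AA^\pm/(Y)$ by an intrinsic invariant, namely the geometry of the variety $Z^\pm$ of $(n-1)$-nilpotent degree-one elements ($Z^-$ has a codimension-one irreducible component, $Z^+$ does not). You instead use the full signed-permutation form $\psi=\pi_\sigma\circ\tau$ and compare the degree-$(n-1)$ graded pieces of the ideals directly: modulo $Y$ these pieces are spanned by signed sums of squarefree monomials, the right-hand side is supported on $(n-1)$-subsets avoiding $a=\sigma^{-1}(1)$, and $\tau(R_I)$ for an $I$ containing both $1$ and $a$ is not --- all the claimed identities ($\tau(R_I)=(-1)^{|I_-|}(R_{I_+}S_{I_-}-S_{I_+}R_{I_-})$ and its reduction $\pm\sum_{i\in I}\epsilon_iX_{I\setminus\{i\}}$) check out. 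Your version is a purely linear-algebraic support argument and avoids the irreducible-component analysis; the paper's version buys an isomorphism invariant of the rings $\BB^\pm$ that does not reference any particular automorphism, at the cost of the geometric discussion of $Z^\pm$. Both are valid.
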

	
	\begin{proof}
		We first treat the smallest case $n=3$ since it forms a blueprint of the proof in the general case. We abbreviate $\AA^\pm = A^*(M^{\theta^\pm})_\Q$. We claim that $\AA^-$ contains a non-zero $2$-nilpotent element which is homogeneous of degree $1$, whereas $\AA^+$ does not.
		
		Indeed, for all $2\leq i<j<k\leq 6$, we have the degree $2$ relation $Y+(X_iX_j+X_iX_k+X_jX_k)=0$ in $\AA^-$, as well as $X_i^2=Y$. Summing four of these relations, we find $4Y+2e_2(X_2,X_3,X_4,X_5)=0$, and thus $(X_2+X_3+X_4+X_5)^2=0$ as claimed.
		
		On the other hand, the relations of degree $2$ in $\AA^+$ are generated by $Y+X_1(X_i+X_j)+X_iX_j=0$ for all $2\leq i<j\leq 6$, and by $X_i^2=Y$. Assume that $x=\sum_{i=1}^6a_iX_i$ is a $2$-nilpotent homogeneous element of degree $1$. Thus
		\begin{align*}
		0=(\sum_{i=1}^6a_iX_i)^2 &= \sum_{i=1}^6a_i^2\underbrace{X_i^2}_{=Y}+2\sum_{i=2}^6a_1a_iX_1X_i+\sum_{2\leq i<j\leq 6}a_ia_j\underbrace{X_iX_j}_{=-Y-X_1(X_i+X_j)} \\
		&=(\sum_{i=1}^6a_i^2-2\sum_{2\leq i<j\leq 6}a_ia_j)Y+2\sum_{i=2}^6(a_1a_i-\sum_{\substack{j=2\\ j\not=i}}^6a_ia_j)X_1X_i,
		\end{align*}
		and hence we find the two conditions
		\begin{align*}
			\sum_{i=1}^6a_i^2 &= 2\sum_{2\leq i<j\leq 6}a_ia_j, & a_i(a_1+a_i-\sum_{j=2}^6a_j)&=0
		\end{align*}
		for all $i = 2,\ldots,6$. Let $I\subseteq \{2,\ldots,6\}$ be the set of indices $i$ for which $a_i\not=0$. If $I$ is empty, the first condition implies $a_1=0$, thus $x=0$ as claimed. Otherwise, for $i\in I$, we have $a_i=\sum_{j=2}^6a_j-a_1=:c\not=0$. Denoting $k=|I|$, we thus find $a_1=(k-1)c$, and the first condition yields $(k-1)^2c^2+kc^2=k(k-1)c^2$, a contradiction.
		
		We now turn to the general case $n \geq 4$. Using the descriptions in Corollary \ref{c:pm} we see that the generators of $\fra^\pm$ have degree at least $n-1$. We assume there were an isomorphism $\phi: \AA^+ \to \AA^-$ of graded algebras. It is induced by an automorphism of the polynomial algebra $\Q[X_1,\ldots,X_{2n},Y]$. As $n-1 \geq 3$, this isomorphism must descend to an automorphism $\phi$ of the algebra $\AA$. We read off the classification in Proposition \ref{p:aut} that $\phi$ must leave the ideal $(Y)$ invariant. The isomorphism $\phi: \AA^+ \to \AA^-$ would hence yield an isomorphism
		$$
			\phi: \AA^+/(Y) \to \AA^-/(Y).
		$$
		Abbreviate $\mathcal{B}^\pm = \AA^\pm/(Y)$. We show that the rings $\BB^+$ and $\BB^-$ can't be isomorphic.
		Both $\BB^+$ and $\BB^-$ are quotients of the ring $\Q[X_1,\ldots,X_{2n}]/(X_i^2 = 0) = \BB$. 
		The only relations of degree $n-1$ that define $\BB^+$ inside $\BB$ are
		$$
			e_{n-1}(X_1,X_{i_2},\ldots,X_{i_n}) = 0
		$$
		for all $2 \leq i_2 < \ldots < i_n \leq 2n$. This shows that a basis of the $(n-1)$\textsuperscript{st} homogeneous component of $\BB^+$ is given by the monomials $X_J = \prod_{j \in J} X_j$ with $J$ ranging over all subsets of $\{1,\ldots,2n\}$ with $1 \in J$ and $\lvert J \rvert = n-1$. A monomial $X_J$ with $J \sub \{2,\ldots,2n\}$ and $\lvert J \rvert = n-1$ can be written in terms of these monomials as
		$$
			X_J = -X_1\sum_{j \in J} X_{J-\{j\}}.
		$$
		On the other hand the degree $n-1$ part of $\BB^-$ is described inside $\BB$ by the relations
		$$
			e_{n-1}(X_{i_1},\ldots,X_{i_n}) = 0
		$$
		with $2 \leq j_1 < \ldots < j_n \leq 2n$. This is a system of $\binom{2n-1}{n}$ linearly independent equations in $\binom{2n-1}{n-1}$ variables. Therefore the monomials $X_J$ vanish in $\BB^-$ when $J \sub \{2,\ldots,2n\}$ is a subset of $n-1$ elements.
		
		We consider the Zariski-closed subsets $Z^\pm \subset \BB_1^\pm \cong \A^{2n}$ of $(n-1)$-nilpotent elements, i.e.\ $Z^\pm = \{ a \in \BB_1^\pm \mid a^{n-1} = 0 \}$. Write $a = \sum_{1=1}^{2n} a_iX_i$ as a linear combination of the basis elements. We compute
		\begin{align*}
			0 = a^{n-1} = \sum_{p_1+\ldots+p_{2n}=n-1} \binom{n-1}{p_1\ \ldots\ p_{2n}} a_1^{p_1}\ldots a_{2n}^{p_{2n}} X_1^{p_1}\ldots X_{2n}^{p_{2n}}
				= \sum_{\substack{J \sub \{1,\ldots,2n\} \\ \lvert J \rvert = n-1}} (n-1)!\, a_J X_J
		\end{align*}
		because all squares vanish in $\BB$.
		In the ring $\BB^-$ the above expression simplifies to
		$$
			0 = (n-1)!\! \sum_{\substack{K \sub \{2,\ldots,2n\} \\ \lvert K \rvert = n-2}} a_1a_K X_1X_K
		$$
		from which we see that $Z^-$ is cut out by equations $a_1a_{k_2}\ldots a_{k_{n-2}}$. The closed subset $\{a_1 = 0\}$ is an irreducible component of $Z^-$ of dimension $2n-1$.
		When working in $\BB^+$ we obtain the equation
		$$
			0 = (n-1)!\! \sum_{\substack{K \sub \{2,\ldots,2n\} \\ \lvert K \rvert = n-2}} a_K \bigg( a_1 - \sum_{j\in \{2,\ldots,2n\}-K} a_j \bigg) X_1X_K.
		$$
		Let $U \sub \BB_1^+$ be the open subset defined by $a_L \neq 0$ for all $L \subset \{2,\ldots,2n\}$ with $\lvert L \rvert = 2n-2$. The complement of $U$ is a union of hyperplanes of codimension 2. The intersection $Z^+ \cap U$ is defined by the equations
		$$
			a_1 = \sum_{\nu = 1}^{n+1} a_{j_\nu}
		$$
		where $2 \leq j_1 < \ldots < j_{n+1} \leq 2n$. These are $\binom{2n}{n+1}$ linear equations. The codimension of $Z^+ \cap U$ inside $U$ is hence at least 2. The choice of $U$ then assures that there can be no irreducible component of $Z^+$ which is of codimension 1 inside $\BB_1^+$.
		
		We have shown that $Z^+$ and $Z^-$ can't be isomorphic as varieties which shows that the rings $\BB^+$ and $\BB^-$ are non-isomorphic. This contradicts our assumption that there were an isomorphism $\AA^+ \to \AA^-$. The theorem is proved.
	\end{proof}

Combining this result with the algebraicity of cohomology (see Remark \ref{r:cyc}), we conclude:

\begin{cor}\label{c:non-isom}
The rational cohomology rings of the small desingularizations $M^{\theta^+}$ and $M^{\theta^-}$ of $M^{\theta^0-\sst}$ are not isomorphic if $n \geq 3$.
\end{cor}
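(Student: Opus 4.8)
The plan is to deduce Corollary \ref{c:non-isom} essentially for free from Theorem \ref{t:non-isom} together with the comparison between Chow rings and cohomology rings. First I would invoke Remark \ref{r:cyc}: the equivariant cycle map $A_{\PGL_d}^*(R(U_m,d)^{\theta-\sst})_\Q \to H_{\PGL_d}^*(R(U_m,d)^{\theta-\sst};\Q)$ is an isomorphism of rings for $\theta \in \{\theta^+,\theta^-\}$ (this is the acyclic, $d$-coprime case, covered both by the general statement in Remark \ref{r:cyc} and by \cite{KW:95}). Since $\theta^\pm$ is generic, the semi-stable locus equals the stable locus and the quotient stack $\mathcal{M}^{\theta^\pm-\sst}=[R(U_m,d)^{\theta^\pm-\sst}/\PGL_d]$ coincides with the smooth projective variety $M^{\theta^\pm}$, so the equivariant Chow/cohomology rings of the stack are the ordinary Chow/cohomology rings of $M^{\theta^\pm}$. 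Hence $H^*(M^{\theta^\pm};\Q) \cong A^*(M^{\theta^\pm})_\Q$ as graded rings.

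Given this, the argument is a two-line syllogism: an isomorphism $H^*(M^{\theta^+};\Q) \cong H^*(M^{\theta^-};\Q)$ of graded rings would, composing with the cycle-map isomorphisms on both sides, produce an isomorphism $A^*(M^{\theta^+})_\Q \cong A^*(M^{\theta^-})_\Q$ of graded rings, contradicting Theorem \ref{t:non-isom} whenever $n \geq 3$. That both $M^{\theta^+}$ and $M^{\theta^-}$ are small desingularizations of the common singular space $M^{\theta^0-\sst}$ is exactly the content of \cite[Thm.\ 4.3]{Reineke:15} as recalled in the excerpt, applied to the quiver $U_m$ with $d=(1,\ldots,1;2)$ and $\theta=\theta^0$; the hypothesis that $\chi_{U_m}$ be symmetric on $\ker(\theta^0)$ and the genericity of $\theta^\pm$ are supplied by Lemma \ref{l:forbidden}. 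So the only thing to write is a short paragraph assembling these citations.

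I do not expect any genuine obstacle here, since every needed ingredient is already in place; the one point worth a careful sentence is making sure the cycle map is a ring isomorphism (not merely an additive one) and that it is natural enough to be compatible with a hypothetical ring isomorphism on cohomology --- but this is precisely what \cite[Thm.\ 5.1]{FR:15} and \cite[Thm.\ 3]{KW:95} provide. A small stylistic choice is whether to phrase the conclusion via the stack $\mathcal{M}^{\theta^\pm-\sst}$ or directly via the variety $M^{\theta^\pm}$; I would use the variety, noting once that the two agree by genericity. With that, the proof of Corollary \ref{c:non-isom} is complete.

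\begin{proof}
By Lemma \ref{l:forbidden} the stability conditions $\theta^+$ and $\theta^-$ are generic deformations of $\theta^0$, so by \cite[Thm.\ 4.3]{Reineke:15} the induced morphisms $M^{\theta^\pm} \to M^{\theta^0-\sst}$ are small desingularizations. Since $\theta^\pm$ is generic, $M^{\theta^\pm} = M(U_m,d)^{\theta^\pm}$ is a smooth projective variety and the quotient stack $\mathcal{M}^{\theta^\pm-\sst}$ coincides with it; hence $A^*(M^{\theta^\pm})_\Q = A_{\PGL_d}^*(R(U_m,d)^{\theta^\pm-\sst})_\Q$. By Remark \ref{r:cyc} the equivariant cycle map furnishes a graded ring isomorphism $A^*(M^{\theta^\pm})_\Q \cong H^*(M^{\theta^\pm};\Q)$. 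Now if the rings $H^*(M^{\theta^+};\Q)$ and $H^*(M^{\theta^-};\Q)$ were isomorphic as graded rings, then composing with these cycle-map isomorphisms would give a graded ring isomorphism $A^*(M^{\theta^+})_\Q \cong A^*(M^{\theta^-})_\Q$, contradicting Theorem \ref{t:non-isom} for $n \geq 3$.
\end{proof}
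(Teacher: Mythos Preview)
Your proof is correct and follows exactly the paper's approach: the paper simply states that the corollary follows by combining Theorem \ref{t:non-isom} with the algebraicity of cohomology from Remark \ref{r:cyc}. Your version spells out the details (genericity of $\theta^\pm$, identification of stack and variety, and the small-resolution statement from \cite{Reineke:15}) more explicitly than the paper does, but the logic is identical.
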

	
	\bibliographystyle{abbrv}
	\bibliography{Literature}
\end{document}